\newcommand{\R}{\mathbb{R}}
\newcommand{\Z}{\mathbb{Z}}
\newcommand{\F}{\mathcal{F}}
\newcommand{\G}{\mathcal{G}}
\DeclareMathOperator{\spanset}{\operatorname{span}} % Span
\DeclareMathOperator*{\argmin}{\operatorname{arg\,min}} % argument minimizer
\theoremstyle{plain}
\newtheorem{theorem}{Theorem}[section]
\newtheorem{lemma}[theorem]{Lemma}
\theoremstyle{definition}
\newtheorem{definition}[theorem]{Definition}
\theoremstyle{remark}
\newtheorem*{remark}{Remark}
\newtheorem*{remarks}{Remarks}
\numberwithin{equation}{section}
\setlist[enumerate]{font = \normalfont}
\title{Bilinear quadratures for inner products}
\date{\today}
\author{Christopher A. Wong}
\begin{document}

\maketitle

\begin{abstract}
A bilinear quadrature numerically evaluates a continuous bilinear map, such as the $L^2$ inner product, on continuous $f$ and $g$ belonging to known finite-dimensional function spaces. Such maps arise in Galerkin methods for differential and integral equations. The construction of bilinear quadratures over arbitrary domains in $\R^d$ is presented. In one dimension, integration rules of this type include Gaussian quadrature for polynomials and the trapezoidal rule for trigonometric polynomials as special cases. A numerical procedure for constructing bilinear quadratures is developed and validated.
%We present a general framework for constructing numerical integration rules over an arbitrary domain in $\R^d$ that are exact when the integrand is the product of two continuous functions belonging to known finite-dimensional function spaces. Integrals of this ``bilinear'' or ``inner product'' form arise in Galerkin methods for PDE and integral equations. We prove that the integration rules that arise from this framework reduce to well-known ``classical'' quadratures: Gaussian quadrature for polynomials of one variable, and the trapezoid rule for trigonometric polynomials. We conclude that this framework is an appropriate generalization of classical quadrature schemes to higher dimensions and other function spaces. We then describe a numerical procedure for constructing these integration rules and present some numerical results.
\end{abstract}

\section{Introduction}

%It is important to note that not all numerical integration problems involve inner products or some type of bilinear structure, and hence the framework we will develop cannot completely replace general-purpose quadrature.

%Classical quadratures such as Gaussian and trapezoidal rules are widely used in computational mathematics. Such methods are designed to efficiently and accurately evaluate the integrals of elements of a given function space over a domain $\Omega \subset \R^d$. Abstractly, classical quadratures are methods to evaluate a continuous linear functional on some subspace of $C(\Omega)$.  A general framework for constructing numerical methods that can evaluate a continuous \emph{bilinear} map between two function spaces is presented in this paper. The most common bilinear map to evaluate is the $L^2$ inner product
%\begin{equation*}
%\langle f,g \rangle_{L^2(\Omega)} := \int_{\Omega} f(x) g(x) \, dx,
%\end{equation*}
%on $f$ and $g$ in $L^2(\Omega)$. An important application is computing the orthogonal projection of a function onto a given function space.

Classical quadratures such as Gaussian and trapezoidal rules accurately evaluate continuous linear functionals such as
\[
\int_{\Omega} f(x) w(x) \, dx
\]
for $f$ in a finite-dimensional space of continuous functions. Bilinear quadratures evaluate continuous bilinear forms such as the weighted $L^2$ inner product
\[
\langle f, g \rangle_{L^2} = \int_{\Omega} f(x) g(x) w(x) \, dx
\]
or the weighted $H^1$ inner product
\[
\langle f, g \rangle_{H^1} = \int_{\Omega} \sum_{i,j=1}^d \Big(\frac{\partial f}{\partial x_i} a_{ij}(x) \frac{\partial g}{\partial x_j}\Big) + f(x) g(x) \, dx
\]
on finite-dimensional spaces of continuous functions $f,g$ on $\Omega \subset \R^d$.

$L^2$ inner products compute orthogonal projections onto subspaces, while $H^1$ inner products provide local solutions to elliptic problems, a key ingredient of the finite element method. For example, let $\Omega \subset \R^d$ be a smooth bounded domain, let $L$ be a uniformly elliptic operator, $f \in L^2(\Omega)$, $g \in L^2(\partial \Omega)$, and $\gamma \in L^{\infty}(\partial \Omega)$. Consider the Robin problem
\begin{equation}
\label{eqn.ellipticPDErobin}
\begin{cases}
\text{Find } u \in H^1(\Omega) \text{ satisfying} \\
Lu = f \text{ in } \Omega, \gamma u + \tfrac{\partial u}{\partial n} = g \text{ on } \partial \Omega.
\end{cases}
\end{equation}
%The weak formulation for \eqref{eqn.ellipticPDErobin} seeks $u \in H^1(\Omega)$ such that
%\[
%\int_{\Omega} Du \cdot Dv + \int_{\partial \Omega} \gamma u v = \int_{\Omega} fv + \int_{\partial \Omega} gv \quad \text{ for all } v \in H^1(\Omega).
%\]
When $L = - \Delta$, then if a bilinear form $a: H^1(\Omega) \times H^1(\Omega) \rightarrow \R$ is defined by $a(u,v) = \int_{\Omega} Du \cdot Dv + \int_{\partial \Omega} \gamma uv$, the weak formulation to \eqref{eqn.ellipticPDErobin} seeks $u \in H^1(\Omega)$ satisfying
\begin{equation}
\label{eqn.ellipticPDErobinweak}
a(u,v) = \langle f, v \rangle_{L^2(\Omega)} + \langle g, v \rangle_{L^2(\partial \Omega)} \text{ for all } v \in H^1(\Omega).
\end{equation}
The Galerkin method constructs an approximate solution to \eqref{eqn.ellipticPDErobinweak} by choosing finite-dimensional function spaces $\F_0, \G_0$ and seeking $u_0 \in \F_0$ satisfying
\begin{equation}
\label{eqn.ellipticPDErobinweakgalerkin}
a(u_0,v_0) = \langle f, v_0 \rangle_{L^2(\Omega)} + \langle g, v_0 \rangle_{L^2(\partial \Omega)} \text{ for all } v_0 \in \G_0.
\end{equation}
The linear system \eqref{eqn.ellipticPDErobinweakgalerkin} is solved in a basis, which requires computing a number of $L^2$ inner product integrals. These integrals should be computed both efficiently and accurately.

Efficiency is achieved by using the fewest function evaluations possible. When $d > 1$, the optimal efficiency of a classical quadrature is unknown. For a bilinear quadrature, the minimum number of function evaluations is equal to the dimension of function space being integrated. The inner product of two functions $f,g$ belonging to given finite-dimensional function spaces is computed by the formula
\begin{equation}
\label{eqn.biquadformula1}
\langle f, g \rangle = f(\mathbf{x})^{\ast} W g(\mathbf{y}),
\end{equation}
where $f(\mathbf{x}) \in \R^m$ and $g(\mathbf{y}) \in \R^n$ are evaluations of $f$ and $g$ at sets of points $\mathbf{x}$ and $\mathbf{y}$ in $\Omega$, respectively, and $W$ is a matrix. The rank of the bilinear form is equal to the rank of $W$, hence the minimal number of required function evaluations is equal to the dimension of that function space.

% ^ Explain the above better

Accuracy is achieved by defining and minimizing integration error. In a bilinear quadrature, this is a nonlinear optimization problem for $\mathbf{x}, \mathbf{y}$, and $W$ in \eqref{eqn.biquadformula1}, and is solved using a Newton method for an appropriate objective function \cite{cheng1,bremer1, xiao1}. In this paper an objective function is developed and demonstrated to yield numerically useful bilinear quadrature rules in a general setting.

% Still needs work below
Numerical evaluation of inner product integrals has been studied in \cite{boland1,mcgrath, gribble1, bremer1,chen} and as ``bilinear quadrature'' in \cite{luik, knockaert}. This paper borrows some of the framework from these past works but develops and utilizes a different optimization procedure to produce quadrature rules.

\section{Theory}
\subsection{Abstract formulation}
In this section the problem of evaluating a general continuous bilinear form on a pair of Banach spaces is considered. Results are given in great generality so that they apply to any continuous bilinear forms. Later, these results are applied to useful special cases such as the $L^2$ and $H^1$ inner products.

\begin{definition}
Let $\F$ and $\G$ be real Banach spaces. Then a \emph{bilinear quadrature} of order $(m,n)$ on $\F \times \G$ is a bilinear form $Q$ defined by linear maps $L_1: \F \rightarrow \R^m$ and $L_2: \G \rightarrow \R^n$ and a bilinear map $B: \R^m \times \R^n \rightarrow \R$, such that, for each $f \in \F$ and $g \in \G$,
\[ Q(f,g) = B(L_1 f, L_2 g). \]
\end{definition}

%\begin{remark}
%For simplicity we have given the underlying field as $\R$ as is the case in many applications. We can adjust our definition to include Banach spaces over $\C$ by changing ``bilinear'' to ``sesquilinear''.
%\end{remark}

\begin{definition}
Let $\F,\G$ be real Banach spaces with a continuous bilinear form $\langle \cdot, \cdot \rangle: \F \times \G \rightarrow \R$. Finite-dimensional subspaces $\F_0 \subset \F$ and $\G_0 \subset \G$ are a \emph{dual pair} if 
\begin{align*}
& \forall f \in \F_0 \setminus \{0\}, \exists g \in \G_0 \text{ such that } \langle f,g \rangle \neq 0, \\
& \forall g \in \G_0 \setminus \{0\}, \exists f \in \F_0 \text{ such that } \langle f,g \rangle \neq 0.
\end{align*}
\end{definition}

If $\F_0, \G_0$ are a dual pair then $\dim(\F_0) = \dim(\G_0)$.

% Give examples of dual pairs?

\begin{definition}
Let $\F,\G$ be real Banach spaces with a continuous bilinear form $\langle \cdot, \cdot \rangle: \F \times \G \rightarrow \R$, and let $\F_0 \subset \F$ and $\G_0 \subset \G$ be a dual pair. A bilinear quadrature $Q$ on $\F \times \G$ is \emph{exact with respect to} $\F_0 \times \G_0$ if
\[ \langle f,g \rangle = Q(f,g) \text{ for every } f \in \F_0, g \in \G_0.\]
\end{definition}

Such a bilinear quadrature evaluates the bilinear form on $\F_0 \times \G_0$ exactly. We have the diagram
\[ 
\begin{tikzcd}
\F_0 \times \G_0 \arrow{rd}[swap]{\langle \cdot, \cdot \rangle} \arrow{r}{(L_1, L_2)} &  \R^m \times \R^n \arrow{d}{B(\cdot, \cdot)} \\
 & \R \\
\end{tikzcd}
\]
If the parent spaces $\F, \G$ are implied, we will abuse notation by referring to an exact bilinear quadrature on $\F_0 \times \G_0$.

\begin{remark}
If $\F,\G$ are infinite-dimensional and $Q$ is exact on $\F_0 \times \G_0$, then
\[
\sup_{f \in \F, g \in \G} |Q(f,g) -\langle f,g \rangle| = \infty,
\]
so a bilinear quadrature can only be accurate on finite-dimensional subspaces.
\end{remark}

\begin{lemma}
\label{lemma.quadexist}
Let $\F_0 \subset \F$ and $\G_0 \subset \G$ be a dual pair, and let $L_1: \F \rightarrow \R^m$, $L_2: \G \rightarrow \R^n$ be linear. Then there exists bilinear $B: \R^m \times \R^n \rightarrow \R$ such that the map $Q(f,g) = B(L_1 f,L_2 g)$ is an exact quadrature on $\F_0 \times \G_0$ if and only if $\left. L_1 \right|_{\F_0}$ and $\left. L_2 \right|_{\G_0}$ are both injective.
\end{lemma}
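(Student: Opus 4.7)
The plan is to split into the two implications, with the reverse direction done by construction and the forward direction by contraposition.

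For the ($\Rightarrow$) direction, I would argue by contrapositive. Suppose, say, that $L_1|_{\F_0}$ fails to be injective (the case of $L_2|_{\G_0}$ is symmetric). Then there is some nonzero $f_0 \in \F_0$ with $L_1 f_0 = 0$, so by bilinearity of any candidate $B$,
\[
Q(f_0, g) = B(L_1 f_0, L_2 g) = B(0, L_2 g) = 0 \quad \text{for every } g \in \G_0.
\]
But the dual pair hypothesis supplies some $g_0 \in \G_0$ with $\langle f_0, g_0 \rangle \neq 0$, contradicting exactness. Hence no exact $Q$ of this form can exist, proving the contrapositive.

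For the ($\Leftarrow$) direction, I would construct $B$ explicitly. Let $d = \dim \F_0 = \dim \G_0$ (equality noted after the dual pair definition), and fix bases $e_1,\dots,e_d$ of $\F_0$ and $h_1,\dots,h_d$ of $\G_0$. Injectivity of $L_1|_{\F_0}$ and $L_2|_{\G_0}$ means the vectors $L_1 e_i \in \R^m$ and $L_2 h_j \in \R^n$ are each linearly independent, so I can extend them to bases $\{L_1 e_1, \dots, L_1 e_d, u_{d+1}, \dots, u_m\}$ of $\R^m$ and $\{L_2 h_1, \dots, L_2 h_d, v_{d+1}, \dots, v_n\}$ of $\R^n$. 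Define $B$ on this pair of bases by
\[
B(L_1 e_i, L_2 h_j) = \langle e_i, h_j \rangle, \qquad B(u_k, \cdot) = 0, \qquad B(\cdot, v_\ell) = 0,
\]
for $1 \le i,j \le d$, $k > d$, $\ell > d$, and extend bilinearly to all of $\R^m \times \R^n$. Equivalently, one can write $B(x,y) = (\pi_1 x)^T M (\pi_2 y)$ where $\pi_1, \pi_2$ are the coordinate projections onto the first $d$ basis coefficients and $M_{ij} = \langle e_i, h_j\rangle$.

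Exactness is then a direct verification: given $f = \sum_i a_i e_i \in \F_0$ and $g = \sum_j b_j h_j \in \G_0$, bilinearity of $B$ and of $\langle \cdot, \cdot \rangle$ yield
\[
B(L_1 f, L_2 g) = \sum_{i,j} a_i b_j B(L_1 e_i, L_2 h_j) = \sum_{i,j} a_i b_j \langle e_i, h_j \rangle = \langle f, g \rangle.
\]
There is no substantial obstacle in either direction; the only subtlety is ensuring that a bilinear $B$ defined on two distinguished bases of $\R^m$ and $\R^n$ always extends bilinearly to the whole product, which is immediate since bilinear forms on a product of finite-dimensional spaces are freely determined by their values on a basis pair.
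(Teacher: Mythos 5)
Your proof is correct and follows essentially the same route as the paper: the forward direction is the paper's injectivity argument phrased contrapositively, and your basis-extension construction of $B$ is just an explicit, coordinate-level realization of the paper's choice $B(x,y) = \bigl\langle (L_1|_{\F_0})^{+}x, (L_2|_{\G_0})^{+}y \bigr\rangle$, i.e.\ pulling the pairing back through left inverses of the restricted maps.
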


\begin{proof}
Suppose $B$ exists. If $f, \tilde{f} \in \F_0$ are distinct then there exists $g \in \G_0$ such that
\[
0 \neq \langle f - \tilde{f}, g \rangle = Q(f - \tilde{f}, g) = B(L_1(f - \tilde{f}), L_2 g),
\]
so $L_1 f \neq L_1 \tilde{f}$ and $\left. L_1 \right|_{\F_0}$ is injective. Similarly for $\left. L_2 \right|_{\G_0}$.

Suppose $\left. L_1 \right|_{\F_0}$ and $\left. L_2 \right|_{\G_0}$ are injective. Their Moore-Penrose pseudoinverses $(\left. L_1 \right|_{\F_0})^{+}$ and $(\left. L_2 \right|_{\G_0})^{+}$ left-invert $L_1$ and $L_2$, respectively. Define a bilinear map on $\R^m \times \R^n$ by
\[ B(x,y) = \left\langle (\left. L_1 \right|_{\F_0})^{+} x, (\left. L_2 \right|_{\G_0})^{+} y \right\rangle. \]
Then, for all $f \in \F_0, g \in \G_0$,
\begin{align*}
B(L_1 f, L_2 g) & = \left\langle (\left. L_1 \right|_{\F_0})^{+} \left. L_1\right|_{\F_0} f, (\left. L_2 \right|_{\G_0})^{+} \left.L_2\right|_{\G_0}g \right\rangle \\
& = \langle f,g \rangle.
\end{align*}
\end{proof}
From Lemma~\ref{lemma.quadexist} a necessary condition for an exact bilinear quadrature is that $m \geq \dim{\F_0}, n \geq \dim{\G_0}$. Minimal order is achieved when $m = \dim{\F_0}, n = \dim{\G_0}$ and $B(x,y)$ is uniquely given by
\[
B(x,y) = \left\langle (\left. L_1 \right|_{\F_0})^{-1} x , ( \left. L_2 \right|_{\G_0})^{-1} y \right \rangle.
\]

Exact bilinear quadratures are not unique, as there are many possible linear maps $L_1, L_2$. Furthermore, $B$ may not be unique, since if $n > \dim(\F_0)$, then $\left. L_1 \right|_{\F_0}$ has infinitely many left inverses. Therefore, a method is needed to choose among the infinitely many bilinear quadratures. One metric of quality is that, in addition to its exactness on $\F_0 \times \G_0$, the bilinear quadrature also approximates $\langle f,g \rangle$ for some set of $g$'s outside of $\G_0$.

\begin{definition}
Let $\F_0 \subset \F$ and $\G_0 \subset \G$ be a dual pair, and let $\G_1 \subset \G$ be another finite-dimensional subspace such that
\[\G_1 \subset \F_0^{\perp}:= \{g \in G: \langle f, g \rangle = 0 \text{ for all } f \in \F_0\}.
\]
Let $\mathcal{Q}$ be a set of bilinear quadratures exact on $\F_0 \times \G_0$. Then $Q \in \mathcal{Q}$ is called \emph{minimal on} $\G_1$ if
\begin{equation}
\label{eqn.minquad}
Q = \argmin_{\tilde{Q} \in \mathcal{Q}} \sigma(\tilde{Q}; \F_0, \G_1),
\end{equation}
where
\[
\sigma(\tilde{Q}; \F_0, \G_1) :=\max_{ \substack{0 \neq g \in \G_1 \\ 0 \neq f \in \F_0}} \frac{ | \tilde{Q}(f,g)|}{\|f\|_{\F} \|g\|_{\G}}.
\]
\end{definition}

If $Q$ is minimal on $\G_1$, then it approximates the pairing of $\F_0$ and $\G_0 \oplus \G_1$. Precisely, if $f \in \F_0, g \in \G_0 \oplus \G_1$, and we write $g = g_0 + g_1$ with $g_i \in \G_i$, then
\begin{equation}
\label{eqn.minquaderror}
|Q(f,g) - \langle f,g \rangle| = |Q(f,g_1)| \le \sigma(Q; \F_0, \G_1) \|f\|_{\F} \|g_1\|_{\G}.
\end{equation}
Thus, minimizing $\sigma(Q; \F_0, \G_1)$ will improve the approximation.

%The error bound \eqref{eqn.minquaderror} is particularly useful in the symmetric case where $F = G$ is a Hilbert space with its inner product $\langle \cdot, \cdot \rangle$. In this case a bilinear quadrature computes an orthogonal projection onto $\F_0$.

One important special case for bilinear quadratures is the symmetric case, which is when $F = G$ is an inner product space. In this case, a bilinear quadrature computes an orthogonal projection.

\begin{definition}
Let $\F_0$ and $\G_0$ be a dual pair in an inner product space. Let $\{f_i\}$ be an orthonormal basis for $\F_0$. Given a bilinear quadrature $Q$ exact on $\F_0 \times \G_0$, the \emph{approximate orthogonal projection} onto $\F_0$ arising from $Q$ is the linear map $P_Q$ given by
\[
P_Q(g) = \sum_i Q(f_i, g) f_i.
\]
\end{definition}
An error estimate for orthogonal projections similar to \eqref{eqn.minquaderror} is given later in Theorem~\ref{thm.projectionerror}.

\subsection{Integral formulation}

In this section, the bilinear quadrature framework is applied to the evaluation of Sobolev inner products on function spaces. Let $\Omega \subset \R^d$ be a bounded domain, and let $\F = \G = C^r(\overline{\Omega})$, $r$ a non-negative integer, equipped with a Sobolev inner product

\[
\langle f,g \rangle_{H^s} = \sum_{|\alpha| \le s} \langle D^{\alpha}f,  D^{\alpha} g \rangle_{L^2(\Omega)}
\]
for $s \leq r$.

Choose a dual pair $\F_0, \G_0$ in $\F$. Exactness on $\F_0 \times \G_0$ requires linear maps $L_1: C^r(\overline{\Omega}) \rightarrow \R^m$, $L_2: C^r(\overline{\Omega}) \rightarrow \R^n$, and bilinear form $B: \R^m \times \R^n \rightarrow \R$ so that for every $f \in \F_0, g \in \G_0$, $B(L_1 f, L_2 g) = \langle f, g \rangle_{H^s}$.

Appropriate linear maps $L_1, L_2$ are pointwise evaluations at particular points in $\Omega$. Thus, for the points $\mathbf{x} = (x_1, \ldots, x_m) \in \Omega^m$, $\mathbf{y} = (y_1, \ldots, y_n) \in \Omega^n$, define
\[
L_1f := f(\mathbf{x}) = \begin{bmatrix} f(x_1) \\ \vdots \\ f(x_m) \end{bmatrix},\quad
L_2g := g(\mathbf{y}) = \begin{bmatrix} g(y_1) \\ \vdots \\ g(y_n) \end{bmatrix}.
\]
Given bases $\beta = \{f_1, \ldots, f_k\}$ for $\F_0$ and $\{g_1, \ldots, g_k\}$ for $\G_0$, let $M \in \R^{k \times k}$ be the Gram matrix with entries
\[
M_{ij} = \langle f_i, g_j \rangle_{H^s}.
\]
Since $\F_0, \G_0$ are a dual pair, $M$ is invertible. Define matrix functions
\[
F(\mathbf{x}) := \begin{bmatrix} \F_1(x_1) & \ldots & f_k(x_1) \\ \vdots & & \vdots \\ \F_1(x_m) & \ldots & f_k(x_m) \end{bmatrix}, \quad G(\mathbf{y}) := \begin{bmatrix} \G_1(y_1) & \ldots & g_k(y_1) \\ \vdots & & \vdots \\ \G_1(y_n) & \ldots & g_k(y_n) \end{bmatrix}.
\]
To make $L_1$ and $L_2$ are injective, choose $\mathbf{x}, \mathbf{y}$, such that $F(\mathbf{x})$ and $G(\mathbf{y})$ have full column rank. If $B(v,w) = v^{\ast} W w$ for all $v,w$ for an $m \times n$ matrix $W$, then the bilinear quadrature is exact if and only if 
\begin{equation}
\label{eqn.Wmatrixconstraint}
F(\mathbf{x})^{\ast} W G(\mathbf{y}) = M.
\end{equation}
Therefore a bilinear quadrature rule
\begin{equation}
\label{eqn.innerproductquad}
Q(f,g) = f(\mathbf{x})^{\ast} W g(\mathbf{y})
\end{equation}
evaluates $\langle f,g \rangle_{H^s}$ exactly for any $f \in \F_0, g \in \G_0$. The corresponding approximate orthogonal projection onto $\F_0$ is
\[
P_Q(g) = \sum_{i=1}^k \left[f_i(\mathbf{x})^{\ast} W g(\mathbf{y}) \right] f_i.
\]
In the basis $\beta$, the approximate projection is computed by
\begin{equation}
\label{eqn.approximateprojection}
[P_Q(g)]_{\beta} = F(\mathbf{x})^{\ast} W g(\mathbf{y}) \in \R^k.
\end{equation}

%What still remains is how we choose the points $\mathbf{x},\mathbf{y}$ and matrix $W$. To simplify matters, we restrict ourselves to the case where $n = m = k$, so that $\Psi(y)$ and $\Phi(x)$ are invertible, in which case $W$ is uniquely given by
%\[
%W = {\Psi(y)^{\ast}}^{-1} G \Phi(x)^{-1}.
%\]
%In general this is not a limiting assumption, since using a minimal number of function evaluations is desired for computational efficiency in practice.

Good values for the matrix $W$ and evaluation points $\mathbf{x},\mathbf{y}$ must be determined. Without loss of generality, suppose that the bases $\{f_i\}$ and $\{ g_j\}$ are $H^s$-orthonormal in $C^r(\overline{\Omega})$. Select finite-dimensional $\G_1 \subset C^r(\overline{\Omega})$ for the minimization \eqref{eqn.minquad} and define the feasible set $\mathcal{Q}$ to be all quadratures of the form \eqref{eqn.innerproductquad} satisfying \eqref{eqn.Wmatrixconstraint}. If $\{\gamma_1, \ldots, \gamma_p\}$ is an orthonormal basis for $\G_1$, define
\[
\Gamma(\mathbf{x}) := \begin{bmatrix} \gamma_1(x_1) & \ldots & \gamma_p(x_1) \\ \vdots & & \vdots \\ \gamma_1(x_n) & \ldots & \gamma_p(x_n) \end{bmatrix} \in \R^{n \times p}.
\]
Then \eqref{eqn.minquad} can be reformulated as
\begin{align}
\notag \min_{Q \in \mathcal{Q}} \sigma(Q; \F_0, \G_1) & = \min_{Q \in \mathcal{Q}} \max_{\substack{g \in \G_1, \|g\|_G = 1 \\ f \in \F_0, \|f\|_{F} =1 }} | Q(f,g)| \\ 
\notag & = \min_{\mathbf{x}, \mathbf{y}, W} \max_{ \substack{a,b \in \R^k \\ \|a\|_2 = \|b\|_2 = 1}} | b^{\ast} F(\mathbf{x})^{\ast} W \Gamma(\mathbf{y}) a | \\ 
\label{eqn.mininnerproduct} & = \min_{\mathbf{x}, \mathbf{y}, W}  \sigma_1\left( F(\mathbf{x})^{\ast} W \Gamma(\mathbf{y}) \right) \text{ subject to } F(\mathbf{x})^{\ast} W G(\mathbf{y}) = M,
\end{align}
where $\sigma_1(A)$ is the leading singular value of a matrix $A$. Minimization \eqref{eqn.mininnerproduct} is independent of $\mathbf{x}$, since by \eqref{eqn.Wmatrixconstraint} $F(\mathbf{x})^{\ast}W = M L$, where $L$ is a left inverse of $G(\mathbf{y})$. Therefore $\mathbf{x}$ is chosen by performing a similar minimization on the left, setting an orthonormal basis $\{\lambda_i\}$ for a space $\F_1 \subset \G_0^{\perp}$, defining the corresponding matrix function $\Lambda(\mathbf{x})$, and minimizing
\begin{equation}
\label{eqn.mininnerproduct2}
\min_{\mathbf{x}, \mathbf{y}, W}  \sigma_1\left( \Lambda(\mathbf{x})^{\ast} W G(\mathbf{y}) \right) \text{ subject to } F(\mathbf{x})^{\ast} W G(\mathbf{y}) = M,
\end{equation}
where similarly the dependence of \eqref{eqn.mininnerproduct2} on $\mathbf{y}$ may be dropped since $W G(\mathbf{y})$ is equal to $L^{\ast} M$, where $L$ is a left inverse of $F(\mathbf{x})$.

In the symmetric case $\F_0 = \G_0$, $M = I$, $\F_1 = \G_1$, and $m = n = k$ with $\mathbf{x} = \mathbf{y}$, minimizations \eqref{eqn.mininnerproduct} and \eqref{eqn.mininnerproduct2} are equivalent and simplify to
\begin{equation}
\label{eqn.mininnerproductsymmetric}
\min_{\mathbf{x}} \sigma_1(F(\mathbf{x})^{-1} \Gamma(\mathbf{x})).
\end{equation}
In subsequent sections special attention is given to the symmetric case because it is used for evaluating orthogonal projections.

\subsection{Error estimates}

In this section, upper bounds on several error quantities in computing an approximate orthogonal projection of the form \eqref{eqn.approximateprojection} are estimated.

\begin{theorem}[Euclidean norm error estimate]
\label{thm.projectionerror}
Let $\F_0, \G_0$ be a dual pair in an inner product space $\F$ and $Q$ a bilinear quadrature of the form \eqref{eqn.innerproductquad} that is exact on $\F_0 \times \G_0$. Let $P_Q$ be the approximate orthogonal projection onto $\F_0$ arising from $Q$ with coordinate representation \eqref{eqn.approximateprojection}. If $P$ is the exact orthogonal projection operator onto $\F_0$, $\G_1 \subset \F_0^{\perp}$, and $g = g_0 + g_1 \in \G_0 \oplus \G_1$ such that $g_i \in \G_i$,
\begin{equation}
\label{eqn.projectionerror}
\| [P_Q(g) - P(g)]_{\beta} \|_2 \le \sigma(Q; \F_0, \G_1) \|g_1\|,
\end{equation}
where $\| \cdot \|_2$ is the Euclidean norm.
\end{theorem}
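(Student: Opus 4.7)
The plan is to write out the coordinate vector of $P_Q(g) - P(g)$ in the basis $\beta = \{f_1,\ldots,f_k\}$, show that entry $i$ equals $Q(f_i, g_1)$, and then bound the Euclidean norm of this vector by dualizing against an arbitrary unit vector $a \in \R^k$ and applying the definition of $\sigma(Q; \F_0, \G_1)$.

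First, I would record that since $\{f_i\}$ is an orthonormal basis for $\F_0$, the exact projection is $P(g) = \sum_i \langle f_i, g \rangle f_i$, so in the basis $\beta$ the $i$th coordinate of $P_Q(g) - P(g)$ is exactly $Q(f_i,g) - \langle f_i, g \rangle$. Next, I would simplify this entry by splitting $g = g_0 + g_1$: exactness of $Q$ on $\F_0 \times \G_0$ gives $Q(f_i, g_0) = \langle f_i, g_0 \rangle$, and $\G_1 \subset \F_0^{\perp}$ gives $\langle f_i, g_1 \rangle = 0$, so the $i$th coordinate reduces cleanly to $Q(f_i, g_1)$.

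For the norm bound, I would use the dual characterization $\|v\|_2 = \sup_{\|a\|_2 = 1} |a^{\ast} v|$ applied to $v = [P_Q(g) - P(g)]_\beta$. For any unit $a \in \R^k$, bilinearity of $Q$ in the first slot gives
\[
a^{\ast} [P_Q(g) - P(g)]_\beta = \sum_{i=1}^k a_i Q(f_i, g_1) = Q\Big(\sum_i a_i f_i,\, g_1\Big).
\]
Setting $f := \sum_i a_i f_i \in \F_0$, orthonormality of $\{f_i\}$ gives $\|f\|_\F = \|a\|_2 = 1$, so the definition of $\sigma(Q; \F_0, \G_1)$ yields $|Q(f, g_1)| \le \sigma(Q; \F_0, \G_1)\,\|g_1\|$. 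Taking the supremum over unit $a$ completes the argument.

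There is no real obstacle here; the one point that must be handled carefully is the reduction of the $i$th entry to $Q(f_i, g_1)$, which relies simultaneously on the exactness hypothesis on $\F_0 \times \G_0$ and on the containment $\G_1 \subset \F_0^{\perp}$. Once those two ingredients are combined, the remainder is a routine duality argument identifying the Euclidean norm with a supremum of linear functionals and applying the definition of $\sigma$.
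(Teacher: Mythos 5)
Your proposal is correct and follows essentially the same route as the paper: reduce the $i$th coordinate to $Q(f_i,g_1)$ using exactness on $\F_0\times\G_0$ together with $\G_1\subset\F_0^{\perp}$, then identify the Euclidean norm with a supremum over unit vectors $a$ and use orthonormality to convert $\|a\|_2$ into $\|f\|_{\F}$ before invoking the definition of $\sigma(Q;\F_0,\G_1)$. No gaps.
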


\begin{proof}
This is essentially the same as \eqref{eqn.minquad}. Since $\langle f_i, g \rangle = Q( f_i, g_0)$, then
\begin{align*}
\| [P_Q(g) - P(g)]_{\beta} \|_2 & = \left(\sum_{i=1}^k \left| Q(f_i,g) - \langle f_i, g \rangle\right|^2\right)^{1/2} \\
& = \left(\sum_{i=1}^k | Q(f_i, g) - Q(f_i,g_0) |^2\right)^{1/2} \\
& = \left(\sum_{i=1}^k | Q(f_i,g_1) |^2\right)^{1/2} \\
& = \max_{\alpha \neq 0} \frac{1}{\| \alpha\|_2} \sum_{i=1}^k \alpha_i Q(f_i, g_1),
\end{align*}
where $\alpha = (\alpha_i) \in \R^k$. Each $f \in \F_0$ can be written as $f = \sum_{i} \alpha_i f_i$, so
\begin{align*}
\max_{\alpha \neq 0} \frac{1}{\| \alpha\|_2} \sum_{i=1}^k \alpha_i Q(f_i, g_1)& = \max_{0 \neq f \in \F_0} \frac{Q(f, g_1)}{\|f\|} \\
& \le \sigma(Q; \F_0, \G_1) \|g_1\|.
\end{align*}
\end{proof}

Theorem~\ref{thm.projectionerror} provides an error bound for an approximate orthogonal projection when the projected function $g$ is in $\G_0 \oplus \G_1$. If $\F_0$ is a space of polynomials, then it is also useful to obtain an error estimate that depends on the regularity of $g$.

\begin{theorem}[Uniform norm error estimates for polynomials]
Let $\F = C(\overline{\Omega})$ with $\Omega \subset \R^d$ a bounded, convex domain, equipped with the $L^2$ inner product. Let $\F_0$ be the set of multivariate polynomials of degree at most $n$ with an orthonormal basis $\beta = \{ f_i\}$, let $P: \F \rightarrow \F$ be the orthogonal projection onto $\F_0$, and suppose $P_Q$ is an approximate orthogonal projection onto $\F_0$ with coordinate representation \eqref{eqn.approximateprojection}. There exist a constant $C > 0$ such that for every $g \in C^{n+1}(\overline{\Omega})$, then 
\begin{equation}
\label{eqn.inftyinftyestimate}
\| [Pg - P_Q g]_{\beta} \|_{\infty} \le C \|D^{n+1} g\|_{L^{\infty}},
\end{equation}
where
\[
\|D^{n+1} g\|_{L^{\infty}} := \sum_{|\alpha| = n+1} \max_{x \in \Omega} |D^{\alpha}g(x)|.
\]
\end{theorem}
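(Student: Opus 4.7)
The plan is to reduce the estimate to controlling the residual of a polynomial approximation of $g$ and then exploit that both $P$ and $P_Q$ reproduce polynomials in $\F_0$ exactly. Let $T$ denote the degree-$n$ Taylor polynomial of $g$ expanded about any fixed base point $x_0 \in \Omega$. Because $\Omega$ is bounded and convex, the integral form of the Taylor remainder supplies a constant $C_1 = C_1(\Omega,n,d)$ such that
\begin{equation*}
\|g - T\|_{L^\infty(\Omega)} \le C_1 \, \|D^{n+1} g\|_{L^\infty}.
\end{equation*}
Setting $r := g - T$, we have $T \in \F_0$, and the task becomes bounding the coordinate vector of $Pr - P_Q r$ in $\ell^\infty$ by a uniform multiple of $\|r\|_{L^\infty}$.

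The next step is the reproduction identity $P_Q T = T$. In the symmetric setting $\F_0 = \G_0$ emphasized in the preceding subsection, $T$ lies in $\G_0$, so exactness of $Q$ on $\F_0 \times \G_0$ gives $Q(f_i, T) = \langle f_i, T \rangle_{L^2}$ for each basis element, whence
\begin{equation*}
P_Q T = \sum_i Q(f_i, T) f_i = \sum_i \langle f_i, T \rangle f_i = PT = T.
\end{equation*}
By linearity, $Pg - P_Q g = Pr - P_Q r$, and the $i$-th coordinate of $[Pg - P_Q g]_\beta$ equals $\langle f_i, r \rangle - Q(f_i, r)$. For the exact part, Cauchy--Schwarz gives $|\langle f_i, r \rangle| \le \|f_i\|_{L^2}\|r\|_{L^2} \le |\Omega|^{1/2}\|r\|_{L^\infty}$, using $\|f_i\|_{L^2} = 1$. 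For the quadrature part,
\begin{equation*}
|Q(f_i, r)| = \bigl|f_i(\mathbf{x})^{\ast} W r(\mathbf{y})\bigr| \le \|f_i(\mathbf{x})\|_2 \, \|W\|_{\mathrm{op}} \, \|r(\mathbf{y})\|_2 \le C_2 \, \|r\|_{L^\infty},
\end{equation*}
where $C_2$ absorbs the fixed quantities $\max_i \|f_i(\mathbf{x})\|_2$, $\|W\|_{\mathrm{op}}$, and the number of quadrature points. Taking the maximum over $i$ and combining with the Taylor bound yields \eqref{eqn.inftyinftyestimate} with $C = (|\Omega|^{1/2} + C_2)\, C_1$.

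The main delicacy I anticipate is securing the reproduction identity $P_Q T = T$, which relies on $T$ lying in $\G_0$; this is automatic in the symmetric case but would otherwise require choosing the approximating polynomial inside $\G_0$ (for instance, by composing Taylor's theorem with a projection into $\G_0$ via the Gram pairing, at the cost of enlarging $C_1$). Once the reproduction is in place, the remaining steps are a routine chain of norm inequalities on finite-dimensional spaces, and the theorem follows.
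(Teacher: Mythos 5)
Your proof is correct, and it reaches the estimate by a genuinely different route in one key place: where the paper invokes the Deny--Lions/Bramble--Hilbert lemma to bound $\inf_{q \in \F_0} \|g - q\|_{C^0}$ by $C_{BH}\|D^{n+1}g\|_{L^\infty}$, you instead use the explicit integral-form Taylor remainder about a base point $x_0$, which is valid precisely because $\Omega$ is bounded and convex (the segment $[x_0,x]$ stays in $\Omega$) and $g \in C^{n+1}(\overline{\Omega})$. This is more elementary and yields an explicit constant $C_1$ in terms of $\diam(\Omega)$ and $n$, at the cost of generality: Bramble--Hilbert would extend the argument to functions with only Sobolev regularity, which is why the paper prefers it. The surrounding skeleton is the same in both proofs --- subtract a polynomial in $\F_0$ that both $P$ and $P_Q$ reproduce, then bound the action of $P$ and $P_Q$ on the residual $r$ in the sup norm --- though you bound the two terms $\langle f_i, r\rangle$ and $Q(f_i,r)$ coordinate-by-coordinate via Cauchy--Schwarz and $\|W\|_{\mathrm{op}}$, whereas the paper packages the same content into $\|F^{\ast}W\|_{\infty\to\infty}(1+\|P\|_{C^0\to C^0})$. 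Your flagged delicacy about needing $T \in \G_0$ for the reproduction identity $P_Q T = T$ is a fair observation, but it is not a defect peculiar to your argument: the paper's own step $[P_Q g_0]_{\beta} = [g_0]_{\beta}$ requires exactly the same hypothesis ($\F_0 \subseteq \G_0$, automatic in the symmetric setting in which approximate orthogonal projections are defined and used), so you are no worse off, and your explicit acknowledgement of it is a point in your favor.
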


\begin{proof}
Using \eqref{eqn.approximateprojection} and the exactness of $P_Q$ on $\F_0$, then writing $g = Pg + (I - P)g = g_0 + g_1$, we have
\begin{align*}
\| [Pg - P_Q g]_{\beta} \|_{\infty} & = \| F^{\ast} W g_1(\mathbf{x})\|_{\infty} \\
& \le \| F^{\ast} W\|_{\infty \rightarrow \infty} \|(I - P)g\|_{C^0} \\
& \le \| F^{\ast} W\|_{\infty \rightarrow \infty} \|(I - P)(g - q)\|_{C^0},
\end{align*}
where $q$ is any element of $\F_0$ and $\| \cdot \|_{\infty \rightarrow \infty}$ is the induced matrix $\infty$ norm. Then
\begin{align*}
\| [Pg - P_Q g]_{\beta} \|_{\infty} & \le \| F^{\ast} W\|_{\infty \rightarrow \infty} (1 + \|P\|_{C^0 \rightarrow C^0}) \|g - q\|_{C^0},
\end{align*}
where the $C^0$ operator norm of $P$ is given by
\[
\|P\|_{C^0 \rightarrow C^0} = \max_{x \in \Omega} \int_{\Omega} \Big| \sum_i f_i(t) f_i(x) \Big| \, dt.
\]
By the Deny-Lions/Bramble-Hilbert lemma \cite{ern}, for all $g \in C^{n+1}(\Omega)$ there exists a constant $C_{BH} > 0$ (dependent on $n$ and $\Omega$) such that
\[
\inf_{q \in \F_0} \| g - q\|_{C^0} \le C_{BH} \|D^{n+1}g\|_{L^{\infty}},
\]
which combined with the previous inequality yields the desired result with 
\[
C = \| F^{\ast} W\|_{\infty \rightarrow \infty} (1 + \|P\|_{C^0 \rightarrow C^0}) C_{BH}.
\]

\end{proof}

In the presence of round-off error in function evaluation, the conditioning of an approximate orthogonal projection is also important to quantify.

\begin{theorem}
\label{thm.roundofferror}
Let $P_Q$ be an approximate orthogonal projection of the form \eqref{eqn.approximateprojection}. If $\delta g(\mathbf{y})$ is the absolute error in computing $g(\mathbf{y})$ and $\delta P_Q(g)$ is the resulting projection absolute error, then with respect to a vector norm $\| \cdot \|$,
\[
\frac{ \| [\delta P_Q(g)]_{\beta}\|}{ \|[P_Q(g)]_{\beta}\|} \le \kappa \frac{ \| \delta g(\mathbf{y})\|}{ \|g(\mathbf{y})\|},
\]
where $\kappa = \kappa(F^{\ast}(\mathbf{x})W)$ is the matrix condition number with respect to $\| \cdot \|$.
\end{theorem}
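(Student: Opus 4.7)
The plan is to reduce the claim to the classical relative-error estimate for a single matrix-vector product. Set $A := F(\mathbf{x})^{\ast} W$, so that by the defining formula \eqref{eqn.approximateprojection} we have $[P_Q(g)]_{\beta} = A\,g(\mathbf{y})$. The first step is to note that $P_Q$ depends on $g$ only through the vector of samples $g(\mathbf{y})$, and it does so linearly; hence perturbing $g(\mathbf{y})$ by $\delta g(\mathbf{y})$ produces exactly $[\delta P_Q(g)]_{\beta} = A\,\delta g(\mathbf{y})$. This identifies the projection error with the image of the input error under the same linear map, which is the setup for the standard conditioning bound.

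From there I would carry out the two inequalities that define the condition number. For the numerator, submultiplicativity of the chosen vector norm and its induced operator norm gives
\[
\| [\delta P_Q(g)]_{\beta} \| \;=\; \| A\, \delta g(\mathbf{y}) \| \;\le\; \| A \|\, \| \delta g(\mathbf{y}) \|.
\]
For the denominator, writing $g(\mathbf{y}) = A^{-1} A\, g(\mathbf{y})$ (interpreting $A^{-1}$ as the Moore-Penrose pseudoinverse if $A$ is not square, which is consistent with the general definition of $\kappa$ used earlier in the paper) and again applying submultiplicativity gives
\[
\| g(\mathbf{y}) \| \;\le\; \| A^{-1} \|\, \| A\, g(\mathbf{y}) \| \;=\; \| A^{-1} \|\, \| [P_Q(g)]_{\beta} \|.
\]
Dividing the first inequality by the second and recognizing $\kappa = \| A \|\, \| A^{-1} \|$ yields the stated bound.

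The only real subtlety — and it is hardly an obstacle — is the shape of $A$. In the minimal symmetric case $m=n=k$ the factor $F(\mathbf{x})^{\ast} W$ is a $k\times k$ invertible matrix and $A^{-1}$ is literal; in the more general bilinear setting $A$ is $k\times n$ and one must read $A^{-1}$ as the appropriate one-sided (pseudo)inverse, which is justified because the constraint \eqref{eqn.Wmatrixconstraint} together with the full-column-rank assumptions on $F(\mathbf{x})$ and $G(\mathbf{y})$ forces $A$ to have full row rank $k$. Once that is settled, the proof is essentially a one-line restatement of the textbook relative condition number estimate for $y = Ax$, specialized to $x = g(\mathbf{y})$.
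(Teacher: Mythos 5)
The paper states Theorem~\ref{thm.roundofferror} without proof, so there is nothing to compare against; your argument is exactly the standard one that the author presumably had in mind. Writing $A = F(\mathbf{x})^{\ast}W$, using linearity to get $[\delta P_Q(g)]_{\beta} = A\,\delta g(\mathbf{y})$, and then combining $\|A\,\delta g(\mathbf{y})\| \le \|A\|\,\|\delta g(\mathbf{y})\|$ with $\|g(\mathbf{y})\| \le \|A^{-1}\|\,\|A\,g(\mathbf{y})\|$ is correct and complete in the minimal symmetric case, where $A$ is a square invertible $k\times k$ matrix. That is the case the paper actually uses in its numerical sections (the reported $\kappa_{\infty}(F^{\ast}W)$ values), so your proof does the job.

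Your final paragraph, however, contains a claim that does not hold up. If $A$ is $k\times n$ with $n>k$, full \emph{row} rank gives a right inverse ($AA^{+}=I_k$), not a left inverse; the step $g(\mathbf{y}) = A^{-1}A\,g(\mathbf{y})$ requires $A^{+}A = I_n$, which fails because $A$ then has an $(n-k)$-dimensional null space. For $g(\mathbf{y})$ with a large component in $\ker A$ the denominator $\|[P_Q(g)]_{\beta}\| = \|A\,g(\mathbf{y})\|$ can be made arbitrarily small while $\|g(\mathbf{y})\|$ stays bounded below, so the stated relative-error inequality itself breaks, no matter how $\kappa$ is defined. The honest fix is not a different inverse but an added hypothesis: either restrict to the square invertible case, or restrict $g(\mathbf{y})$ to the row space of $A$ (equivalently, measure the input error only in the component that $A$ sees). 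As written, your ``hardly an obstacle'' remark papers over the one place where the theorem genuinely needs a hypothesis.
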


\subsection{Classical and bilinear quadratures on univariate polynomials}

In this section we review Gaussian quadratures and show they are a special case of a bilinear quadrature in one dimension. We then propose a way to generalize to quadratures evaluating inner products of polynomials on multidimensional domains.

\begin{definition}
Let $\Omega \subset \R^d$ be a connected domain. A \emph{classical quadrature} $q$ of order $n$ on $\Omega$ is a linear functional defined by a set $\mathbf{x} = (x_1, \ldots, x_n)$, $x_i \in \Omega$, called the \emph{nodes}, and a vector $\mathbf{w} \in \R^n$, whose components are called the \emph{weights}, such that for any $f \in C(\Omega)$,
\[
q(f) = \mathbf{w}^{\ast} f(\mathbf{x}) = \sum_{i=1}^n w_i f(x_i).
\]
Furthermore, if $\F_0$ is a subspace of $C(\Omega)$ and $\mu$ is a Borel measure, then $q$ is said to be exact on $\F_0$ if 
\[
q(f) = \int_{\Omega} f \, d\mu
\]
for all $f \in \F_0$.
\end{definition}

Let $\mathbb{P}_n$ be the space of univariate polynomials of degree up to $n$, $I$ an open interval, and $\mu$ a finite absolutely continuous Borel measure on $I$. 

\begin{definition}
Suppose $\mathbb{P}_{2n-1}$ is $\mu$-integrable on $I$. Then a \emph{Gaussian quadrature} of order $n$ on $I$ is a classical quadrature of order $n$ on $I$ that is exact on $\mathbb{P}_{2n-1}$ with respect to $\mu$. 
\end{definition}

The advantages and disadvantages of the theory of quadratures for polynomials are rooted in existence and uniqueness result for Gaussian quadratures.

\begin{theorem}
\label{theorem.classicalgauss}
Suppose $\mathbb{P}_{2n-1}$ is $\mu$-integrable on $I$, and let $\{\phi_k\}$ denote any set of $L^2(I,\mu)$-orthonormal polynomials such that $\deg(\phi_k) = k$. Then the following sets are equal:
\begin{enumerate}
\item The zeros of $\phi_n$.
\item The eigenvalues of the symmetric bilinear form on $\mathbb{P}_{n-1}$ given by
\[ B(f,g) := \int_I x f(x) g(x) \, d\mu = \left\langle x f(x), g(x) \right\rangle_{L^2(I,\mu)}. \]
\item The nodes $\{x_i\}$ of a Gaussian quadrature of order $n$ on $I$.
\end{enumerate}
\end{theorem}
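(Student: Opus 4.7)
The plan is to prove the three-way equality by establishing $(1)=(2)$ and $(1)=(3)$ separately, with the zeros of $\phi_n$ as the common reference set. The starting point is the standard three-term recurrence $x\phi_k(x) = a_k\phi_{k+1}(x) + b_k\phi_k(x) + a_{k-1}\phi_{k-1}(x)$, which follows from the orthogonality and degree conditions on $\{\phi_k\}$, together with the classical fact that $\phi_n$ has exactly $n$ distinct simple zeros in $I$.

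For $(1)=(2)$, I would interpret the eigenvalues of $B$ via the self-adjoint operator $T$ on $\mathbb{P}_{n-1}$ determined by $B(f,g) = \langle Tf, g\rangle_{L^2(I,\mu)}$. In the orthonormal basis $\{\phi_0,\ldots,\phi_{n-1}\}$, the recurrence immediately identifies $T$ with the symmetric tridiagonal Jacobi matrix $J_n$ whose diagonal entries are $b_k$ and off-diagonal entries are $a_k$. The key observation is that for any root $\lambda$ of $\phi_n$, the vector $v_\lambda := (\phi_0(\lambda),\ldots,\phi_{n-1}(\lambda))^{\ast}$ satisfies $J_n v_\lambda = \lambda v_\lambda$: the first $n-1$ rows are the recurrence evaluated at $\lambda$, while the final row matches because the omitted tail term $a_{n-1}\phi_n(\lambda)$ vanishes. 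Since $v_\lambda$ has nonzero first entry $\phi_0(\lambda)$ and the vectors for distinct roots $\lambda$ are linearly independent, the $n$ zeros of $\phi_n$ exhaust the spectrum of $J_n$.

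For $(1)=(3)$, in the forward direction I would build a quadrature from the zeros $\{x_i\}$ of $\phi_n$ by setting weights $w_i = \int_I \ell_i\,d\mu$, where $\ell_i$ are the Lagrange basis polynomials at the nodes $x_i$. Exactness on $\mathbb{P}_{2n-1}$ then follows by polynomial division: any $p \in \mathbb{P}_{2n-1}$ decomposes as $p = q\phi_n + r$ with $q,r \in \mathbb{P}_{n-1}$, and orthogonality of $\phi_n$ to $\mathbb{P}_{n-1}$ gives $\int_I p\,d\mu = \int_I r\,d\mu$, while $\phi_n(x_i)=0$ gives $p(x_i)=r(x_i)$, so the weights (by construction exact on $\mathbb{P}_{n-1}$) close the argument. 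Conversely, given Gaussian quadrature nodes $\{y_i\}$, the polynomial $L(x) := \prod_i(x-y_i) \in \mathbb{P}_n$ satisfies $\int_I qL\,d\mu = \sum_i w_i q(y_i)L(y_i) = 0$ for every $q \in \mathbb{P}_{n-1}$ (testing the quadrature against $qL \in \mathbb{P}_{2n-1}$), so $L$ is orthogonal to $\mathbb{P}_{n-1}$ and hence a scalar multiple of $\phi_n$, forcing $\{y_i\}$ to coincide with the zeros of $\phi_n$.

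The main obstacle I anticipate is the $(1)=(2)$ step, because it requires both a careful interpretation of ``eigenvalues of a bilinear form'' via the $L^2(I,\mu)$ pairing and the mildly delicate verification that the eigenvectors $v_\lambda$ obtained from roots of $\phi_n$ exhaust the entire spectrum of $J_n$. By contrast, the $(1)=(3)$ equivalence is a classical calculation that becomes routine once polynomial division and the Lagrange weight formula are in hand.
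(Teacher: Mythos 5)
Your proof is correct, but it follows a genuinely different route from the paper's in both halves. For the eigenvalue equivalence, you work from the zeros toward the spectrum: you invoke the three-term recurrence to realize $B$ as the Jacobi matrix $J_n$ and exhibit, for each root $\lambda$ of $\phi_n$, the explicit eigenvector $v_\lambda = (\phi_0(\lambda), \ldots, \phi_{n-1}(\lambda))^{\ast}$, then exhaust the spectrum by counting distinct roots. The paper goes the opposite way: it takes an arbitrary polynomial eigenvector $\psi_i$ with eigenvalue $\lambda_i$, shows $(x - \lambda_i)\psi_i \perp \mathbb{P}_{n-1}$, concludes $(x-\lambda_i)$ divides $\phi_n$, and so identifies every eigenvalue as a zero. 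Your direction needs the classical fact that $\phi_n$ has $n$ distinct simple zeros (which you correctly flag as an input); the paper's direction implicitly needs the eigenvalues to be distinct to conclude the full product divides $\phi_n$, so neither argument is free of that background fact. For the quadrature equivalence, the paper links $(2)$ and $(3)$: assuming a Gaussian quadrature exists, exactness on $\mathbb{P}_{2n-1}$ turns the Gram-type identity into the unitary diagonalization $B = UXU^{\ast}$ with the nodes on the diagonal, and existence itself is deferred to a remark citing the weight formula. You instead link $(1)$ and $(3)$ directly with the classical self-contained argument: Lagrange weights plus division by $\phi_n$ for existence, and orthogonality of the nodal polynomial to $\mathbb{P}_{n-1}$ for the converse. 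What your approach buys is a complete existence proof inside the theorem and a very concrete handle on the eigenvectors (which is also the basis of the Golub--Welsch algorithm the paper alludes to); what the paper's approach buys is a slicker passage from quadrature exactness to unitary diagonalization that simultaneously explains why the weights are positive and why the matrix $U$ built from $\sqrt{w_k}\,\phi_j(x_k)$ is orthogonal.
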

\begin{proof}

$(1 \Longleftrightarrow 2)$ Since $B(\cdot,\cdot)$ is symmetric it is diagonalizable with $n$ real eigenvalues $\{\lambda_i\}$. If a polynomial $\psi_i(x)$ is an eigenvector for $\lambda_i$, then for $0 \le j \le n-1$,
\[ \langle x \psi_i(x), \phi_j(x) \rangle = \lambda_i \langle \psi_i(x), \phi_j(x) \rangle \Longrightarrow \langle (x - \lambda_i) \psi_i(x), \phi_j(x) \rangle = 0. \]
Since $(x - \lambda_i) \psi_i(x) \in \mathbb{P}_{n}$ for each $i$, and the only polynomials in $\mathbb{P}_{n}$ that are orthogonal to each of $\phi_0, \ldots, \phi_{n-1}$ are multiples of $\phi_n$, then each $(x - \lambda_i)$ is a factor of $\phi_n(x)$. Thus $\phi_n(x)$ is a multiple of $(x - \lambda_1) \ldots (x - \lambda_n)$ and its zeros are the eigenvalues of $B(\cdot,\cdot)$. \\

$(2 \Longleftrightarrow 3)$ Suppose a Gaussian quadrature with weights $\{w_i\}$ and nodes $\{x_i\}$ exists. With respect to the basis of orthonormal polynomials $\{\phi_k\}$, the bilinear form $B(\cdot,\cdot)$ has a symmetric matrix represention $B$ with entries given by
\[ B_{ij} = \int_I x \phi_i(x) \phi_j(x) \, d\mu = \sum_{k=1}^n w_k x_k \phi_i(x_k) \phi_j(x_k). \]
If
\[
u_k = \begin{bmatrix} \sqrt{w_k} \phi_0(x_k) \\ \vdots \\ \sqrt{w_k} \phi_{n-1}(x_k) \end{bmatrix}, X = \begin{bmatrix} x_1 & & \\ & \ddots & \\ & & x_n \end{bmatrix},
\]
then
\begin{equation}
\label{eqn.multxmatrix}
B = \sum_{k=1}^{n} x_k u_k u_k^{\ast} = U X U^{\ast}.
\end{equation}
Since $\delta_{ij} = \sum_{k=1}^n w_k \phi_i(x_k) \phi_j(x_k)$, then $I = U U^{\ast}$ and $U$ is a unitary matrix. Then \eqref{eqn.multxmatrix} is the unitary diagonalization of the symmetric matrix $B$ with eigenvalues given by the $x_k$'s.
\end{proof}

\begin{remarks}
Theorem \ref{theorem.classicalgauss} shows that if a Gaussian quadrature of order $n$ exists, its nodes are the zeros of $\phi_n$. The existence proof is completed by showing the weights exist and satisfy
\[ 1/w_i = \sum_{j=0}^n (\phi_j(x_i))^2.\]
Therefore, taking the square root $\sqrt{w_k}$ is legitimate \cite{davis}. Theorem \ref{theorem.classicalgauss} also provides an efficient method to construct these quadratures. The matrix $B$ in \eqref{eqn.multxmatrix} is tridiagonal, so its eigenvalues can be calculated quickly, even for very large $n$ \cite{golub}.
\end{remarks}

Gaussian quadrature is optimal for integrating polynomials on an interval, but does not extend readily to higher-dimensional domains. The zeros of a multivariate polynomial are generally not isolated (consider $f(x,y) = xy$) so they cannot all be used as nodes of a classical quadrature. Additionally, the connection between nodes and eigenvalues no longer holds since the eigenvalues are only scalars (the connection extends to two-dimensional domains with complex eigenvalues \cite{vioreanu}).

Bilinear quadratures make sense in any dimension, yet contain Gaussian quadrature as a special case. Consider a classical quadrature with nodes $\mathbf{x} = \{x_i\}$ and weights $\mathbf{w} = \{w_i\}$. If a function $h(x) = f(x) g(x)$ with $f,g$ belonging to function spaces $\F,\G$ respectively, then the classical quadrature $q$ evaluated on $h$ is the same as a bilinear quadrature $Q$ on $f,g$ given by
\[
Q(f,g) = f(\mathbf{x})^{\ast} \begin{bmatrix} w_1 & & \\ & \ddots & \\ & & w_n \end{bmatrix} g(\mathbf{x}) = \mathbf{w}^{\ast} h(\mathbf{x}) = q(h).
\]
The matrix $W$ is diagonal with entries given by the weights of the classical quadrature. Thus for a general bilinear quadrature of the form \eqref{eqn.innerproductquad} the entries of $W$ can be viewed as analogues of the weights.

\begin{theorem}
The nodes of a Gaussian quadrature of order $n$ are the same as the points $\mathbf{x}$ in the unique bilinear quadrature of order $(n,n)$ on $\mathbb{P}_{n-1} \times \mathbb{P}_{n-1}$ that is minimal on $\spanset\{\phi_n\}$, where $\phi_n$ is the orthonormal polynomial of degree $n$. Furthermore, the matrix $W$ in \eqref{eqn.innerproductquad} is a diagonal matrix whose diagonal entries are the weights of the Gaussian quadrature.
\end{theorem}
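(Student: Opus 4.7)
The plan is to recognize that the hypotheses put us into the symmetric case of Section~2.2, reducing the minimization to the one-dimensional formula \eqref{eqn.mininnerproductsymmetric}, and then to show that this objective is pointwise nonnegative and vanishes exactly at the zeros of $\phi_n$.

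First, I would set up notation. The $L^2(I,\mu)$ inner product is symmetric, so with $\F_0 = \G_0 = \mathbb{P}_{n-1}$, $H^0$-orthonormal basis $\{\phi_0, \ldots, \phi_{n-1}\}$, and $\F_1 = \G_1 = \spanset\{\phi_n\}$, the bilinear quadrature is in the symmetric case of the theory: $M = I$, $\mathbf{x} = \mathbf{y}$, $m = n = k$. By \eqref{eqn.mininnerproductsymmetric} the minimality condition becomes
\[
\min_{\mathbf{x}} \sigma_1\!\left( F(\mathbf{x})^{-1} \Gamma(\mathbf{x}) \right),
\]
where $F(\mathbf{x}) \in \R^{n \times n}$ has entries $\phi_{j-1}(x_i)$ and, since $\dim \G_1 = 1$, $\Gamma(\mathbf{x}) \in \R^{n \times 1}$ is the column vector with entries $\phi_n(x_i)$.

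Second, I would exploit the key observation that the leading singular value of a column vector is simply its Euclidean norm, so the objective equals $\|F(\mathbf{x})^{-1}\Gamma(\mathbf{x})\|_2$. This is nonnegative and, because $F(\mathbf{x})$ is invertible whenever the $x_i$ are distinct, vanishes if and only if $\Gamma(\mathbf{x}) = 0$, i.e.\ $\phi_n(x_i) = 0$ for every $i$. Invoking the classical fact that the orthonormal polynomial $\phi_n$ has exactly $n$ simple real zeros (which also follows from Theorem~\ref{theorem.classicalgauss}, since the bilinear form $B(f,g) = \langle xf,g\rangle$ is symmetric on $\mathbb{P}_{n-1}$ and hence has $n$ real eigenvalues, and an eigenvalue of multiplicity $>1$ would force $F$ to be singular at the minimizer), the minimum value $0$ is attained and is attained precisely when the tuple $\mathbf{x}$ enumerates the zeros of $\phi_n$. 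By Theorem~\ref{theorem.classicalgauss} these are exactly the Gaussian nodes.

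Third, I would identify $W$ by reading off the unitarity relation already established in the proof of Theorem~\ref{theorem.classicalgauss}. With $\mathbf{x}$ the Gaussian nodes, the exactness constraint \eqref{eqn.Wmatrixconstraint} with $M = I$ reads $F(\mathbf{x})^{\ast} W F(\mathbf{x}) = I$, so $W = (F F^{\ast})^{-1}$ uniquely. But the matrix $U$ from the proof of Theorem~\ref{theorem.classicalgauss}, with entries $U_{jk} = \sqrt{w_k}\,\phi_{j-1}(x_k)$, satisfies $U U^{\ast} = I$; writing $D = \operatorname{diag}(w_1, \ldots, w_n)$, this identity is exactly $F^{\ast} D F = I$, giving $D = (F F^{\ast})^{-1}$. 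Comparing the two expressions yields $W = D$, so $W$ is diagonal with the Gaussian weights along the diagonal. I expect no real obstacle in the argument; the only substantive point is the observation that $\sigma_1$ of a column vector is its Euclidean norm, which collapses the min–max problem to a norm minimization whose value and minimizers can be read off directly.
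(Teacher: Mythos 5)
Your proof is correct and follows essentially the same route as the paper: reduce to the symmetric minimization \eqref{eqn.mininnerproductsymmetric}, note that $\sigma_1$ vanishes exactly when $\mathbf{x}$ is the zero set of $\phi_n$, and identify the unique $W$ with the diagonal matrix of Gaussian weights. You supply more detail than the paper does at the two points it leaves implicit (why $\sigma_1 \geq 0$ with equality only at the zeros of $\phi_n$, and the explicit computation $W = (FF^{\ast})^{-1} = \operatorname{diag}(w_k)$ via $UU^{\ast} = I$), which is a welcome sharpening rather than a different argument.
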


\begin{proof}
Let $\phi_0, \ldots, \phi_{n-1}$ be the orthonormal polynomials up to degree $n-1$ such that $\deg{\phi_k} = k$, $\mathbf{x} = (x_1, \ldots, x_n) \in \Omega^n$, and define
\[
\Phi(\mathbf{x}) = \begin{bmatrix} \phi_0(x_1) & \ldots & \phi_{n-1}(x_1) \\ \vdots & & \vdots \\ \phi_0(x_n) & \ldots & \phi_{n-1}(x_n) \end{bmatrix}.
\]
Then the minimization problem \eqref{eqn.mininnerproductsymmetric} becomes
\[
\min_{\mathbf{x} \in \Omega^n} \sigma_1\left( \Phi(\mathbf{x})^{-1} \begin{bmatrix} \phi_n(x_1) \\ \vdots \\ \phi_n(x_n) \end{bmatrix} \right).
\]
This is uniquely minimized (up to reordering of the $x_i$'s) when $\mathbf{x}$ is the set of zeros of $\phi_n$ in which case $\sigma_1 = 0$. The corresponding bilinear quadrature $Q$ exactly evaluates products where one polynomial has degree $n-1$ and the other has degree $n$. Then $Q$ has the same evaluation points as a bilinear quadrature formed from the Gaussian quadrature. Since $W$ is unique, then it must be equal to the diagonal matrix with entries given by the weights of the Gaussian quadrature.
\end{proof}

The above result suggests that a good way to accurately compute inner products of polynomials on a multidimensional domain is to utilize a symmetric bilinear quadrature that is exact on $\mathbb{P}_n \times \mathbb{P}_n$ and minimal on $\mathbb{P}_{n+1} \cap \mathbb{P}_n^{\perp}$. Just as Gaussian quadratures accurately integrate nearly polynomial functions accurately, bilinear quadratures constructed in the above manner are expected to evaluate inner products of nearly polynomial functions accurately.  Numerical results for these quadrature are shown in section~\ref{sec.computation}.

\subsection{Classical and bilinear quadratures on trigonometric polynomials}

For the space of trigonometric polynomials
\[ T_{n-1} = \spanset\{1, \sin{x}, \ldots, \sin{(n-1)x}, \cos{x}, \ldots, \cos{(n-1)x}\}, \]
it is known that the $(n+1)$-point trapezoidal rule
\[ \mathrm{Tra}(p) := \frac{\pi}{n} p(0) + \frac{\pi}{n} p(2\pi) + \frac{2\pi}{n} \sum_{j=1}^{n-1} p\left( \frac{2 \pi j}{n} \right)\]
is exact for integrating all $p \in T_{n-1}$ over the interval $[0,2\pi]$. Since $T_{n-1}$ is a rotationally-invariant function space on the circle $\R/2\pi\Z$, the trapezoidal rule yields a family of $n$-point classical quadratures for $T_{n-1}$ given by
\begin{equation}
\label{eqn.trigquad}
\int_0^{2 \pi} p(x) \, dx = \frac{2 \pi}{n} \sum_{j=0}^{n-1} p(x_j) \quad \text{for all } p \in T_{n-1}, \quad x_{j+1} - x_j = \frac{2\pi}{n}.
\end{equation}
When $n$ is odd, the above trapezoidal rule quadrature is a special case of a bilinear quadrature:
\begin{theorem}
\label{thm.trigquad}
Let $n > 0$ be an odd integer. Then the set of classical quadratures on $T_{n-1}$ in \eqref{eqn.trigquad} are equivalent to symmetric bilinear quadratures of order $(n,n)$ on $T_{(n-1)/2} \times T_{(n-1)/2}$ that are minimal on $\spanset\{\sin{nx},\cos{nx}\}$.
\end{theorem}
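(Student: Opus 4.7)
My strategy is to parallel the preceding theorem on Gaussian quadratures by reducing to the symmetric minimization \eqref{eqn.mininnerproductsymmetric} and showing that equispaced nodes are the minimizers, with the Dirichlet kernel identifying the weight matrix. Take the $L^2$-orthonormal basis $\phi_0 = 1/\sqrt{2\pi}$, $\phi_{2k-1}(x) = \cos(kx)/\sqrt{\pi}$, $\phi_{2k}(x) = \sin(kx)/\sqrt{\pi}$ for $k = 1,\ldots,(n-1)/2$ on $T_{(n-1)/2}$, and $\gamma_1 = \cos(nx)/\sqrt{\pi}$, $\gamma_2 = \sin(nx)/\sqrt{\pi}$ on $\G_1 = \spanset\{\sin nx, \cos nx\}$. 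Assembling $F(\mathbf{x}) \in \R^{n \times n}$ and $\Gamma(\mathbf{x}) \in \R^{n \times 2}$, the characterization becomes: find all minimizers $\mathbf{x}$ of $\sigma_1(F(\mathbf{x})^{-1}\Gamma(\mathbf{x}))$.

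I would first compute the objective at equispaced nodes $x_j = x_0 + 2\pi j/n$. The identities $\cos(nx_j) = \cos(nx_0)$ and $\sin(nx_j) = \sin(nx_0)$ show that both columns of $\Gamma(\mathbf{x})$ are scalar multiples of $\mathbf{1} = \sqrt{2\pi}\,\phi_0(\mathbf{x})$, so $F(\mathbf{x})^{-1}\Gamma(\mathbf{x})$ is a rank-one matrix whose only nonzero entries lie in its first row; a direct calculation yields $\sigma_1 = \sqrt{2}$. At the same $\mathbf{x}$, the Dirichlet kernel identity gives $(FF^\ast)_{jk} = K(x_j, x_k) = (n/(2\pi))\delta_{jk}$, forcing the uniquely-determined symmetric weight matrix $W = (FF^\ast)^{-1} = (2\pi/n) I$. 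The resulting bilinear quadrature $Q(f,g) = (2\pi/n)\sum_j f(x_j) g(x_j)$ then coincides with the bilinear form induced by \eqref{eqn.trigquad}, establishing that every trapezoidal rule gives rise to a candidate minimizer.

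The converse direction---showing every bilinear quadrature minimal on $\spanset\{\sin nx, \cos nx\}$ has equispaced nodes---is the main obstacle. A natural reduction is to express the objective through the complex trigonometric interpolant $q_n \in T_{(n-1)/2}$ of $e^{inx}$ at $\mathbf{x}$, using the polynomial division $z^{(3n-1)/2} = \omega(z) r(z) + p(z)$ with $\omega(z) = \prod_j(z - e^{ix_j})$ the nodal polynomial and $\deg r = (n-1)/2$. This writes the relevant Fourier coefficients of $q_n$ as explicit symmetric functions of $\{e^{ix_j}\}$, reducing the minimization to an extremal problem for symmetric polynomials of $n$ points on the unit circle. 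The hard part is the global lower bound together with its rigidity: unlike the Gauss case where the one-dimensional test space $\spanset\{\phi_n\}$ admits $n$ zeros and $\sigma_1$ can be driven to zero, here $\G_1$ is two-dimensional and cannot be simultaneously annihilated at $n$ nodes (since $\cos^2 + \sin^2 = 1$), so the minimum value $\sqrt{2}$ must be extracted from a sharper geometric extremality principle and the equality case must be characterized as forcing the $\{e^{ix_j}\}$ to be the $n$-th roots of a common unit modulus complex number---i.e., $\mathbf{x}$ equispaced on the circle.
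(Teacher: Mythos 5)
Your forward direction is correct: at equispaced nodes both columns of $\Gamma(\mathbf{x})$ collapse onto the constant vector, the objective evaluates to a finite value, and the discrete orthogonality of the exponentials forces $W = (FF^\ast)^{-1} = (2\pi/n)I$, recovering the trapezoidal rule. But the theorem lives or dies on the converse, and there you have a genuine gap: you explicitly defer ``the global lower bound together with its rigidity'' to ``a sharper geometric extremality principle'' that you never supply. Observing that $\cos^2(nx)+\sin^2(nx)=1$ only shows $\Gamma(\mathbf{x})\neq 0$, hence $\sigma_1>0$; it gives no quantitative lower bound matching the value attained at equispaced nodes, and no characterization of the equality case. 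As written, the proposal proves that the trapezoidal nodes are \emph{candidates}, not minimizers.

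The missing step has an elementary resolution, which is how the paper closes the argument. Work in the complex exponential basis and bound $\sigma_1(F(\mathbf{x})^{-1}\Gamma(\mathbf{x}))$ below by the $\ell^2$ norm of a single column $\mathbf{u}$. That column solves a Vandermonde system in $z_j = e^{ix_j}$: its entries are the coefficients of the polynomial $p$ of degree $n-1$ interpolating prescribed unimodular values at the $z_j$. Setting $q(z) = z\,p(z)$, the conditions become $q(0)=0$ and $q(z_j)=1$, whose unique degree-$n$ solution is $q(z) = 1 - \prod_j(1 - z/z_j)$. Its leading coefficient has modulus $\prod_j |z_j|^{-1} = 1$, so $\|\mathbf{u}\|_2 \geq 1$ for \emph{every} node configuration, with equality only when all lower-order coefficients of $q$ vanish, i.e.\ $q(z) = \alpha z^n$ with $|\alpha|=1$ --- which forces the $z_j$ to be the $n$-th roots of a fixed unimodular number, i.e.\ equispaced nodes. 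Your proposed division $z^{(3n-1)/2} = \omega(z)r(z) + p(z)$ is pointed in the same direction, but you stop at reformulating the problem as ``an extremal problem for symmetric polynomials on the unit circle'' without extracting this leading-coefficient identity, which is the whole content of the lower bound. (Separately, note the paper's proof takes the residual space at frequency $(n+1)/2$ rather than $n$, which is why its minimum is $1$ rather than your $\sqrt{2}$; either normalization works once the interpolation argument is in place, but you should fix one and carry it through consistently.)
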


\begin{proof}
Set $n = 2k+1$. Since $T_{n-1}$ is rotationally invariant on the circle, then if $Q$ is a symmetric bilinear quadrature on $T_k \times T_k$ of the form \eqref{eqn.innerproductquad}, $\sigma(Q)$ is invariant under rotations of the evaluation points $\mathbf{x} = (x_1, \ldots, x_n)$. Therefore without loss of generality $x_1 = 0, x_j \in [0, 2\pi)$. Define
\[ F(\mathbf{x}) = \frac{1}{\sqrt{2\pi}}\begin{bmatrix} 1 & \ldots & 1 \\ e^{-ikx_2} & \ldots & e^{ikx_2} \\ \vdots & & \vdots \\ e^{-ikx_n} & \ldots & e^{ikx_n} \end{bmatrix}, \Gamma(\mathbf{x}) = \frac{1}{\sqrt{2\pi}}\begin{bmatrix} 1 & 1 \\ e^{-i(k+1)x_2} & e^{i(k+1)x_2} \\ \vdots & \vdots \\ e^{-i(k+1)x_n} & e^{i(k+1)x_n} \end{bmatrix}, \]
and it suffices to prove that choosing $x_j = 2\pi j/n$ solves the minimization problem \eqref{eqn.minquad}.

If $x_j = 2 \pi j/n$, then the first column of $F(\mathbf{x})$ is the second column of $\Gamma(\mathbf{x})$, and the last column of $F(\mathbf{x})$ is the first column of $\Gamma(\mathbf{x})$. Therefore, in this case, $F(\mathbf{x})^{-1} \Gamma(\mathbf{x}) = \begin{bmatrix} e_n & e_1 \end{bmatrix}$, where $e_j$ is the $j$-th standard coordinate vector, hence $\sigma_1(F(\mathbf{x})^{-1} \Gamma(\mathbf{x})) = 1$.

We claim that for any choice of nodes $\mathbf{x} \in [0,2\pi)^n$ with $x_1 = 0$,
\begin{equation}
\label{eqn.trigquadsigmamin}
\sigma_1(F(\mathbf{x})^{-1} \Gamma(\mathbf{x})) \geq 1.
\end{equation}
If \eqref{eqn.trigquadsigmamin} is established, then setting $x_j = 2\pi (j-1)/n$ yields a minimal quadrature in $\mathcal{Q}$. To show this, let $\mathbf{u}$ be the first column of $F(\mathbf{x})^{-1} \Gamma(\mathbf{x})$. We will show that $\|\mathbf{u}\|_2 \geq 1$, from which \eqref{eqn.trigquadsigmamin} follows. The column $\mathbf{u}$ satisfies the equation
\[ F(\mathbf{x}) \mathbf{u} = \frac{1}{\sqrt{2\pi}}\begin{bmatrix} 1 \\ e^{-i(k+1)x_2} \\ \vdots \\ e^{-i(k+1)x_n} \end{bmatrix}, \]
which is equivalent to the Vandermonde system
\[ \begin{bmatrix} 1 & 1 & \ldots & 1 \\ 1 & e^{ix_2} & \ldots & e^{i(n-1)x_2} \\ \vdots & \vdots & & \vdots \\ 1 & e^{ix_n} & \ldots & e^{i(n-1)x_n} \end{bmatrix} \mathbf{u} = \begin{bmatrix} 1 \\ e^{-ix_2} \\ \vdots \\ e^{-ix_n} \end{bmatrix}. \]
Setting $z_j = e^{i x_j}$, then the entries of $\mathbf{u}$ are the coefficients of a degree $n-1$ complex polynomial $p(z)$ such that $p(z_j) = 1/z_j$. Setting $q(z) = z p(z)$, it suffices to find a degree $n$ polynomial $q(z)$ such that $q(0) = 0$ and $q(z_j) = 1$. Such a $q$ is unique and
\[ q(z) = 1 - \prod_{j=1}^n \left(1 - z/z_j \right). \]
Then the leading coefficient of $q$, which is also the leading coefficient of $p$, has absolute value $1$, and hence $\|\mathbf{u}\|_2 \geq 1$. 
\end{proof}
\begin{remark}
The trapezoidal rule \emph{uniquely} generates a minimal bilinear quadrature, since in that case $q(z) = \alpha z^n$ for some $|\alpha| = 1$. If $x_j$'s are not equispaced, $q(z)$ has some nonzero lower-order coefficients.
\end{remark}

\subsection{Lobatto quadrature and the non-invertible case}

While minimizing the number of evaluation points will reduce the cost of evaluating a quadrature, it may be advantageous to use more points than is optimal in order to improve accuracy. One example is Lobatto quadratures for polynomials of one variable, which use more points than Gaussian quadratures. In this section, we observe Lobatto quadratures are a special case of a bilinear quadrature where extra evaluation points are used, in which case the matrix function $F(\mathbf{x})$ is non-invertible. The formulation of Lobatto-like bilinear quadratures on general domains is given.

\begin{definition}
Let $\mathbb{P}_{2n-1}$ be $\mu$-integrable on an interval $I = [a,b]$. Then the corresponding \emph{Lobatto quadrature} is a classical quadrature of order $n+1$ exact on $\mathbb{P}_{2n-1}$ with respect to $\mu$ such that if $x_0, \ldots, x_n$ are the nodes, then $x_0 = a$ and $x_n = b$.
\end{definition}

\begin{theorem}
Suppose $\mathbb{P}_{2n-1}$ is $\mu$-integrable on $I$, and let $\{\phi_k\}$ denote the unique set of orthonormal polynomials such that $\deg(\phi_k) = k$. Then there exists a unique Lobatto quadrature of order $n+1$, and the interior nodes $\{x_i: 1 \le i \le n-1\}$ are the zeros of $\tfrac{d}{dx} \phi_{n}(x)$.
\end{theorem}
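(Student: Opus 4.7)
The strategy is the classical one: reduce the Lobatto problem to a standard Gauss problem for a modified measure. Let $L(x) := (x-a)(b-x)$ and form the positive Borel measure $d\tilde\mu := L\, d\mu$. Since $\mathbb{P}_{2n-3}$ is $\tilde\mu$-integrable, Theorem~\ref{theorem.classicalgauss} provides a unique $(n-1)$-point Gauss quadrature for $\tilde\mu$ with interior nodes $x_1, \ldots, x_{n-1} \in (a,b)$ (the zeros of the $(n-1)$-st $\tilde\mu$-orthonormal polynomial $\tilde\phi_{n-1}$) and positive weights $\tilde w_i$.

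For existence of the Lobatto rule I would use these $x_i$ as interior nodes, set $w_i := \tilde w_i / L(x_i)$ for $1 \le i \le n-1$, and determine $w_0, w_n$ by demanding exactness on the constants and on $x$. To verify exactness on all of $\mathbb{P}_{2n-1}$, write any such $p$ via polynomial division as $p = Lq + r$ with $q \in \mathbb{P}_{2n-3}$ and $r \in \mathbb{P}_1$ the unique linear interpolant of $p$ at $a,b$; the piece $Lq$ is handled by the Gauss rule for $\tilde\mu$ since $\sum_{i=1}^{n-1} w_i L(x_i) q(x_i) = \sum_{i=1}^{n-1} \tilde w_i q(x_i) = \int Lq\, d\mu$, while the linear piece $r$ is handled by construction and linearity once the endpoint weights are fixed. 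Uniqueness is a dimension count against $2n$ exactness conditions: any two Lobatto rules would yield a nonzero moment functional annihilating $\mathbb{P}_{2n-1}$, which is ruled out by an argument parallel to the uniqueness part of Theorem~\ref{theorem.classicalgauss}.

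The delicate step, and the main obstacle, is identifying the interior nodes with the zeros of $\phi_n'$ rather than of $\tilde\phi_{n-1}$. Both are polynomials of degree $n-1$ with simple real zeros in $(a,b)$, so it suffices to show $\phi_n' \propto \tilde\phi_{n-1}$, i.e.\ that $\phi_n'$ is $\tilde\mu$-orthogonal to $\mathbb{P}_{n-2}$. The plan is to integrate by parts in
\[
\int_a^b \phi_n'(x)\, q(x)\, L(x)\, d\mu(x) \quad \text{for } q \in \mathbb{P}_{n-2};
\]
the boundary contribution vanishes since $L(a) = L(b) = 0$, leaving an integral of $\phi_n$ against a polynomial of degree at most $n-1$, which vanishes by orthogonality of $\phi_n$ to $\mathbb{P}_{n-1}$. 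Making this rigorous requires enough regularity on the density of $\mu$ that $(qL)\cdot d\mu$ can be differentiated while keeping the resulting factor polynomial-valued. This goes through cleanly for the classical weights (Jacobi, Hermite, Laguerre) satisfying a Pearson-type ODE, which appears to be the implicit setting of the theorem; under such hypotheses $\tilde\phi_{n-1}$ and $\phi_n'$ are proportional and the node identification follows.
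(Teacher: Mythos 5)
The paper offers no proof of this theorem --- it is quoted as classical background --- so your proposal has to be judged on its own. For existence and uniqueness your route is the standard and correct one: pass to $d\tilde\mu = (x-a)(b-x)\,d\mu$, place the interior nodes at the zeros of the $(n-1)$-st $\tilde\mu$-orthonormal polynomial $\tilde\phi_{n-1}$, split $p = Lq + r$ with $r$ the linear interpolant at the endpoints, and fix $w_0, w_n$ by exactness on $\mathbb{P}_1$ (an invertible $2\times 2$ system since $a \neq b$). The one step you should actually write out is uniqueness of the nodes: for any exact Lobatto rule with interior nodes $y_1, \ldots, y_{n-1}$ and $\omega(x) = \prod_i (x - y_i)$, the polynomial $L\omega q$ lies in $\mathbb{P}_{2n-1}$ for every $q \in \mathbb{P}_{n-2}$ and vanishes at every node, so exactness forces $\int \omega q \, d\tilde\mu = 0$, hence $\omega \propto \tilde\phi_{n-1}$; the weights are then pinned down by exactness on $\mathbb{P}_n$ alone via an invertible Vandermonde-type system. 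Your ``dimension count'' gestures at this but is not yet an argument.

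The obstacle you flag in the last step is not a technicality you failed to overcome: the identification of the interior nodes with the zeros of $\phi_n'$ is genuinely false for a general finite absolutely continuous $\mu$, so the theorem as literally stated cannot be proved. A concrete counterexample is $d\mu = e^x\,dx$ on $[-1,1]$ with $n = 2$: the unique interior Lobatto node is the zero of $\tilde\phi_1$, namely $\int_{-1}^1 x(1-x^2)e^x\,dx \big/ \int_{-1}^1 (1-x^2)e^x\,dx \approx 0.1945$, while the monic $\phi_2(x) = x^2 + px + q$ has $p \approx -0.2689$, so $\phi_2'$ vanishes at $\approx 0.1345$. Your integration-by-parts criterion explains exactly why: one needs the boundary terms to vanish and $(x-a)(b-x)\,w'(x)/w(x)$ to be a polynomial of degree at most one, so that $(qLw)'/w$ stays in $\mathbb{P}_{n-1}$; this is the Pearson condition satisfied by Jacobi-type weights and violated by $e^x$, for which that ratio has degree two. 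The honest conclusion is that your argument proves existence, uniqueness, and the placement of the interior nodes at the zeros of $\tilde\phi_{n-1}$ in full generality, and proves the $\phi_n'$ characterization only under the additional Jacobi/Pearson hypothesis, which the theorem statement omits.
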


A Lobatto quadrature of order $n+1$ corresponds to a symmetric bilinear quadrature that is exact on $\mathbb{P}_{n-1} \times \mathbb{P}_{n-1}$ and minimal on $\mathbb{P}_{n}$ in which the $W$ matrix is diagonal and the matrix $F(\mathbf{x})$ is given by
\begin{equation*}
F(\mathbf{x}) = \begin{bmatrix} \phi_1(a) & \ldots & \phi_k(a) \\ \phi_1(x_1) &  & \phi_k(x_1) \\ \vdots & & \vdots \\ \phi_1(x_{n-1}) & & \phi_k(x_{n-1}) \\ \phi_1(b) & \ldots & \phi_k(b) \end{bmatrix}. 
\end{equation*} 

Unlike in the Gaussian quadrature case, the matrix $F = F(\mathbf{x})$ is not square, so there exists infinitely many matrices $W$ satisfying \eqref{eqn.Wmatrixconstraint}. Therefore, the simplified minimization condition \eqref{eqn.mininnerproductsymmetric} cannot be employed, and one must optimize over both the quadrature nodes $\mathbf{x}$ and matrices $W$. In general, suppose $F$ is $m \times k$ and $G$ is $n \times k$, both with full column rank. Then all matrices $W$ satisfying \eqref{eqn.Wmatrixconstraint} are of the form
\begin{equation}
\label{eqn.Wmatrixformula}
W = (F^{\ast})^{+} MG^{+} + Y - F F^{+} Y G G^{+},
\end{equation}
where $Y$ is an arbitrary $m \times n$ matrix. In the symmetric case $F = G$ and $M = I$, a minimal bilinear quadrature is found through the unconstrained minimization
\begin{equation}
\label{eqn.minquad2}
\text{Find } Y \in \R^{m \times m} \text{ and } \mathbf{x} \text{ minimizing } \sigma_1 \left( F^{+} \Gamma + F^{\ast} Y (I - F F^{+}) \Gamma \right)
\end{equation}
While computationally more expensive, this optimization procedure can be used to compute symmetric Lobatto-like bilinear quadratures. First fix points $\mathbf{x}_0$ that the bilinear quadrature is required to use, then construct the (typically non-square) matrix function $F(\mathbf{x}_0, \mathbf{x})$, where only the points $\mathbf{x}$ are varying. Then minimize according to \eqref{eqn.minquad2}. This procedure is applicable for arbitrary domains $\Omega$, any space of continuous functions, and any inner product on that space.

\subsection{Change of variables}

For a bilinear quadrature computing an $L^2(\Omega)$ inner products, a bilinear quadrature can be cheaply constructed for $L^2(\Phi(\Omega))$ inner products, where $\Phi$ is an affine invertible change of variables. For continuous functions $f_i$ on $\Omega$, set
\[
\tilde{f}_i(\Phi(x)) = f_i(x) |\det(D\Phi)|^{-1/2}.
\]
Then
\[
\langle \tilde{f}_i,\tilde{f}_j \rangle_{L^2(\Phi(\Omega))} = \int_{\Phi(\Omega)} \tilde{f}_i(y) \tilde{f}_j(y) \, dy = \int_{\Omega} f_i(x) f_j(x) \, dx = \langle f_i,f_j \rangle_{L^2(\Omega)}.
\]
The Jacobian $D\Phi$ is constant when $\Phi$ is affine, so if the bilinear quadrature on $L^2(\Omega)$ exact on $\F_0 \times \G_0$ is
\[
Q(f,g) = f(\mathbf{x})^{\ast} W g(\mathbf{y)},
\]
a bilinear quadrature on $L^2(\Phi(\Omega))$ for $(\F_0 \circ \Phi^{-1}) \times (\G_0 \circ \Phi^{-1})$ is given by
\begin{equation}
\label{eqn.changeofvariables}
\tilde{Q}(\tilde{f}, \tilde{g}) = \tilde{f}(\Phi(\mathbf{x}))^{\ast} \tilde{W} \tilde{g}(\Phi(\mathbf{y})), \quad \tilde{W} = W |\det(D\Phi)|^{-1}.
\end{equation}

For an $H^s$ inner product with $s > 0$, in general a new bilinear quadrature cannot be cheaply constructed under a change of variables. However, when $\Phi(x) = \lambda U x + b$ is affine with $\lambda \in \R$ and $U$ a unitary matrix, a change of variables can still be performed at low cost. Let $W$ be the matrix in a bilinear quadrature of form \eqref{eqn.innerproductquad} computing $H^1(\Omega)$ inner products. Then write $W = W_0 + W_1$, where $W_0$ is the matrix for a bilinear quadrature that computes $L^2(\Omega)$ inner products. Then a new bilinear quadrature for $H^1(\Phi(\Omega))$ is formed with matrix
\[
\tilde{W} = |\lambda|^{-1} W_0 + |\lambda|^{-3} W_1
\]
and evaluation points mapped by $\Phi$.

\section{Computation}
\label{sec.computation}

In this section, a basic numerical procedure to produce symmetric bilinear quadrature rules is described. Afterward, some numerical examples of bilinear quadrature rules are presented.

\subsection{Orthogonalization}
For a function space $\F_0$, one may initially have a numerical routine to evaluate (up to machine precision) basis functions $\psi_1, \ldots, \psi_k$ for $\F_0$ that are not orthonormal. Assuming that the inner products $\langle \psi_i, \psi_j \rangle = M_{ij}$ can be computed exactly, $F(\mathbf{x})$ is computed from $\Psi(\mathbf{x})$ and Gram matrix $M$ by
\begin{enumerate}
\item Compute the lower triangular matrix $L$ in the Cholesky factorization $M = LL^{\ast}$.
\item For a given $\mathbf{x}$, perform a lower-triangular solve on the matrix equation $\Psi(\mathbf{x})^{\ast} = L Z$.
\item Set $F(\mathbf{x}) = Z^{\ast}$.
\end{enumerate}
The same procedure can be used to produce an orthonormal basis for the function space $\F_1$ that the bilinear quadrature is minimized against.

\subsection{Nonlinear optimization}

For the invertible symmetric case we have reduced our problem to the minimization problem \eqref{eqn.mininnerproduct}:

\begin{equation*}
\text{Find } \mathbf{x} \text{ minimizing } \sigma_1 \left( F(\mathbf{x})^{-1} \Gamma(\mathbf{x}) \right). 
\end{equation*}
This is a nonlinear optimization problem in $d \cdot k$ variables, where $d$ is the dimension of the integration region $\Omega$ and $k = \dim(\F_0)$. 

The problem of minimizing the largest singular value of a matrix function $A(\mathbf{x})$ is equivalent to minimizing the largest eigenvalue of the symmetric positive semidefinite matrix $A^{\ast} (\mathbf{x}) A(\mathbf{x})$. This type of the eigenvalue optimization problem has been extensively studied in its own right; see \cite{overton} \cite{shapiro}.

Often $F(\mathbf{x})$ and $\Gamma(\mathbf{x})$, but not their derivatives, can be accurately computed. Also, the multiplicity of the largest singular values are generally unknown. Consequently, a quasi-Newton method is ideal for the optimization procedure. The objective function is non-convex and typically has multiple local minima, so the optimization procedure is run with many initial guesses. Furthermore, in the presence of many nearby local minima, after each convergent result, the computed points can be perturbed by a small value $\delta$ and the procedure run again with perturbed points as another initial guess. This is repeated until suitable convergence. While this procedure may be expensive, computing a quadrature is typically a one-time cost, after which the quadrature can be used repeatedly for its applications.

In our numerical experiments, we employ a quasi-Newton method with BFGS updates as implemented as part of Matlab's \texttt{fminunc} routine \cite{broyden, fletcher,goldfarb,shanno}. Since $F(\mathbf{x})^{-1} \Gamma(\mathbf{x})$ is a small, dense matrix, its norm is computed by calculating its full SVD. Up to $10^5$ initial random points uniformly distributed across the domain are used, and the procedure is iterated until convergence in double-precision arithmetic.  

For our numerical implementation we do not reinforce the constraint that the evaluation points $x_i$ remain in the integration domain $\Omega$. While in general the full constrained minimization problem may be necessary, we have empirically observed that it is not necessary for quadratures on polynomials. This can be explained by observing that the orthogonal polynomials grow rapidly outside of $\Omega$; thus points outside the domain are not expected to be good candidates for the solution to the minimization problem.

\begin{remarks}
In the case of polynomials it is possible to accurately compute the gradients of $F(\mathbf{x})$ and $\Gamma(\mathbf{x})$, in which case a quasi-Newton method may be unnecessary. The BFGS method has been chosen since it is robust for different function spaces.
\end{remarks}

\subsection{Bilinear quadratures on triangular domains}

In practical applications one of the most important cases to consider is the $L^2$ product of polynomials on a simplex. For example, in the finite element method one typically solves a two-dimensional PDE locally on polynomials supported on triangular domains. The discretization requires computing a number of inner products. In this section we compute bilinear quadratures that are exact on polynomials on a triangular domain.

Because the space of polynomials is affine-invariant it suffices to find evaluation points for polynomials on a reference triangle. Given a bilinear quadrature on a reference triangle a bilinear quadrature for polynomials on any other triangle can be cheaply obtained using the change of variables formula \eqref{eqn.changeofvariables}. A basis of orthogonal polynomials on the right triangle with vertices $(-1, -1), (-1,1), (1, -1)$ is given by
\begin{equation}
\label{eqn.orthtriangle}
K_{m,n}(x,y) = \left( \frac{1 - v}{2} \right)^m P_m\left( \frac{2x + y + 1}{1 - y} \right) P_n^{2m + 1,0} (y),
\end{equation}
where $P_m$ is the $m$th Legendre polynomial and $P_n^{\alpha, \beta}$ is the $n$th Jacobi polynomial with parameters $\alpha, \beta$. These functions can be computed efficiently and stably as in \cite{xiao1}.

Using this basis, symmetric bilinear quadratures exact for the $L^2$ inner product over this right triangle on $\F_0 = \mathbb{P}_n$ and minimal on $\F_1 = \mathbb{P}_{n+1} \cap \mathbb{P}_n^{\perp}$ were computed. The minimal number of evaluation points were used, in which case the number of points required is
\[
k = \dim(\mathbb{P}_n) = \binom{n+2}{2}.
\]
In Table~\ref{table.trianglequad}, for each computed bilinear quadrature rule, the minimized largest singular value $\sigma = \sigma_1(F(\mathbf{x})^{-1} \Gamma(\mathbf{x}))$ is given, along with the $\infty$-norm condition number of the matrix for the approximate orthogonal projection.

In Figure~\ref{fig.trianglequad}, the evaluation points of two bilinear quadrature rules on the equilateral triangle are shown. Notice that the points possess some symmetries. The expectation that quadrature points for polynomials should have some symmetries has been exploited in the past to reduce the complexity of searching for classical quadratures \cite{xiao1}. In the quasi-Newton method used to solve \eqref{eqn.mininnerproductsymmetric}, however, no symmetry conditions were explicitly enforced.

\begin{table}[t]
\begin{center}
\begin{tabular}{l r r r}
$n$ & $k$ & $\sigma$ & $\kappa_{\infty}(F^*W)$ \\ \hline
0 & 1  & 0.00000 & 1.00000e+0\\
1 & 3  & 0.14507 & 2.82218e+0\\
2 & 6  & 0.30373 & 6.29185e+0\\
3 & 10 & 0.47762 & 1.15455e+1\\
4 & 15 & 0.65817 & 2.03810e+1\\
5 & 21 & 0.78394 & 3.39955e+1\\
6 & 28 & 0.87930 & 4.71065e+1\\
7 & 36 & 0.95305 & 8.48889e+1\\
8 & 45 & 1.05595 & 1.09107e+2
\end{tabular}
\end{center}
\caption{Numerical results for $k$-point bilinear quadratures on $\mathbb{P}_n \times \mathbb{P}_n$ for $L^2$ on the interior of the reference right triangle.}
\label{table.trianglequad}
\end{table}

\begin{figure}
\centering
\includegraphics[width = 0.4\textwidth]{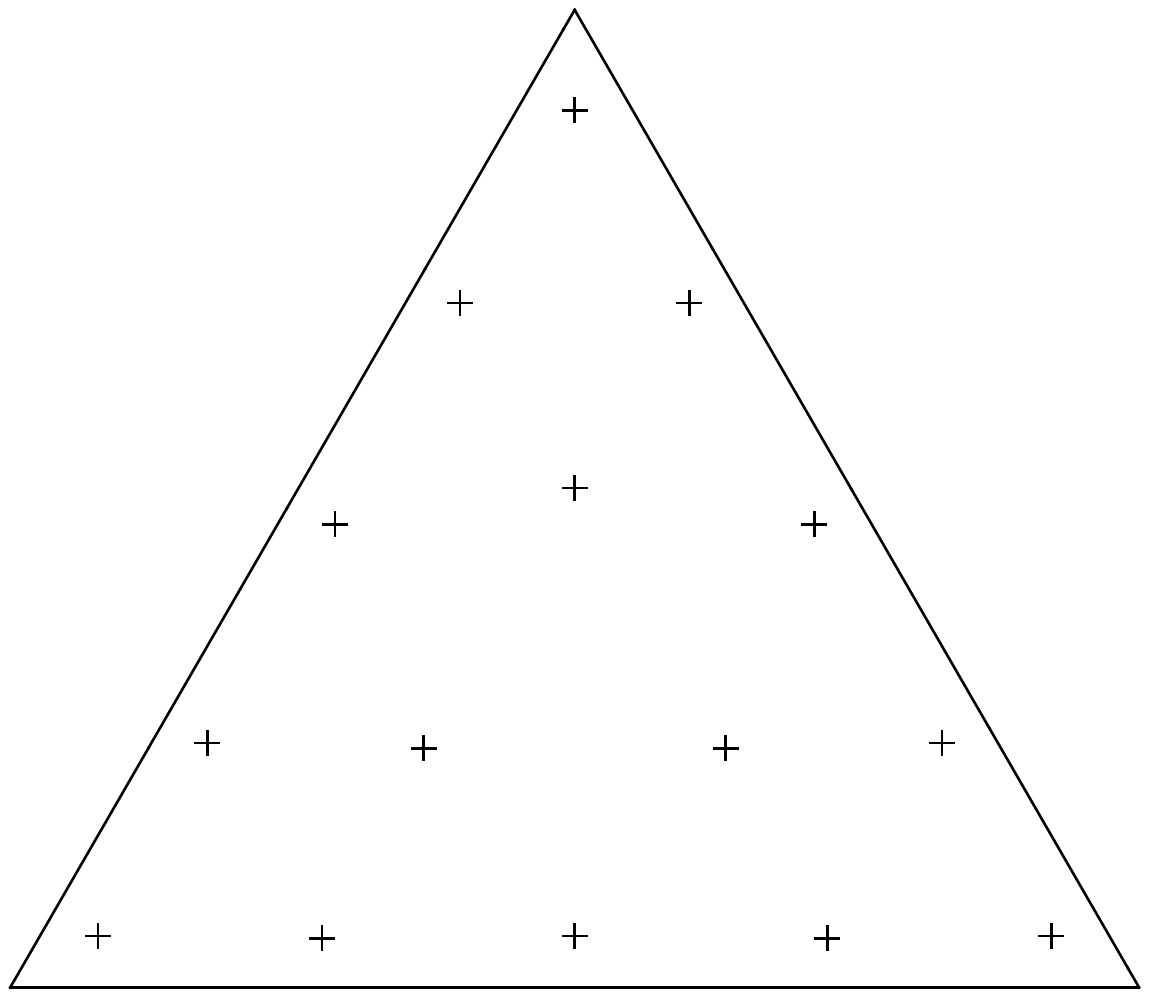}
\includegraphics[width = 0.4\textwidth]{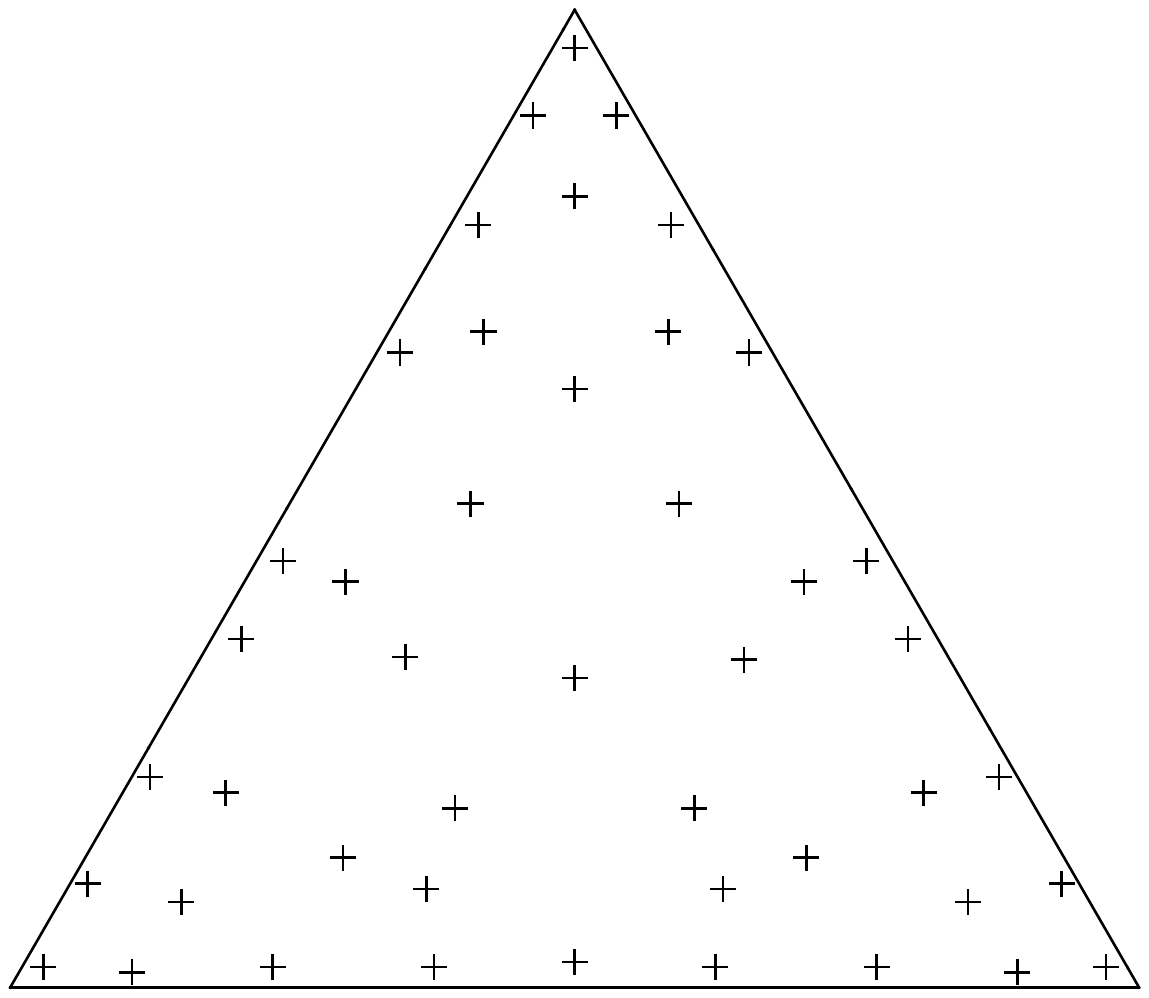}
% 3.5, 3.375, 2.0, 1.75
\caption{Evaluation points for bilinear quadratures on $\mathbb{P}_n \times \mathbb{P}_n$ for $L^2$ on the interior of an equilateral triangle, for $n = 4,8$.}
\label{fig.trianglequad}
\end{figure}

\subsection{Numerical accuracy of quadratures on triangles}

In the section the computed bilinear quadratures on triangles are compared against existing high-order classical quadrature schemes on triangles in the setting of orthogonal projections. Given the space $\F_0 = \mathbb{P}_n$ on $\Omega$ with $L^2$-orthonormal basis $\beta = \{f_i\}$, orthogonal projection operator $P$ onto $\F_0$, and given $g \in C^{\infty}(\Omega)$, we wish to compute
\[
[Pg]_{\beta} = \begin{bmatrix} \langle f_1, g \rangle_{L^2} \\ \vdots \\ \langle f_k, g \rangle_{L^2} \end{bmatrix}.
\]
The column vector $[Pg]_{\beta}$ can be computed using either an approximate orthogonal projection, or a classical quadrature for each entry $\int_{\Omega} f_i g$.

Since the approximate orthogonal projection matrix $F^{\ast} W$, weights of the classical quadrature, and locations of evaluation points are all precomputed, the flop cost for each method is solely determined by the number of evaluation points needed. The 28-point bilinear quadrature as shown in Figure~\ref{fig.trianglequad} was utilized. For comparison we chose two different 28-point classical quadratures, each exact on polynomials of degree up to 11, due to Dunavant \cite{dunavant} and Xiao and Gimbutas \cite{xiao1}, respectively. These quadratures were computed using the libraries available from \cite{burkardt}. Both classical quadratures were similarly transformed to an equilateral triangle of side length 1.

For our numerical experiments, we draw the projected function $g$ from four different probability distributions of functions, which we denote by $\mathbb{P}'_5, \mathbb{P}'_6, C$, and $TP$. 

We define
\[
\mathbb{P}'_n : = \{ g \in \mathbb{P}_n: \|g\|_{L^2} = 1\},
\]
with probability measure given by drawing a random vector of coefficients uniformly in $[-1,1]^k$, and then normalizing the coefficients to have $\ell^2$-norm $1$, and using those as the Fourier coefficients on the orthonormal polynomials on the triangle.

The set $C$ contains smooth functions with slow decay, and is defined by functions of the form
\[
g(x,y) = \frac{1}{1 + (a_1 x + a_2 y)^2},
\]
where $a = (a_1, a_2)$ is drawn uniformly from the unit circle.

The set $TP$ contains smooth non-polynomial functions with oscillations, and has elements of the form
\[
g(x,y) = e^{a_1 x + a_2 y} \cos(4b_1 x + 4 b_2 y) p(x,y),
\]
where parameters $(a_1, a_2)$ and $(b_1, b_2)$ are both drawn uniformly from the unit circle, and $p(x,y)$ is a random element of $\mathbb{P}'_2$ with $L^2$ norm 1 as chosen in the same manner as for the first two cases.

\begin{table}
\begin{center}
\begin{tabular}{ l  | c  c  c  c }
				& $\mathbb{P}'_5$ 	& $\mathbb{P}'_6$ 	& $C$ 				& $TP$ 				\\ \hline
Dunavant 		& 9.38e-14			& 3.97e-01			& 4.97e-05			& 9.06e-03 			\\  
Xiao/Gimbutas 	& 3.29e-15			& 2.73e-01			& 1.91e-05			& 4.74e-03 			\\ 
Bilinear 		& 3.92e-15			& 3.99e-15			& 6.74e-06			& 1.71e-03 			\\ 
\end{tabular}
\end{center}
\caption{Average $\ell^2$-norm relative error in computing approximate orthogonal projection coefficients onto $\mathbb{P}_6$ for four different sets of functions using three methods that use 28 function evaluations.}
\label{table.testprojections}
\end{table}

For each randomly chosen function $g$, we computed the column vector $[P_Q g]_{\beta}$ using the three quadrature methods. The exact value $[Pg]_{\beta}$ was computed with a 295-point classical quadrature that exactly integrates polynomials up to degree 40, as computed in \cite{xiao1}. The $\ell^2$ norm relative error was averaged over $10^4$ randomly generated $g$ for each of the four classes of functions. The resulting average relative errors are shown in Table \ref{table.testprojections}.

On $\mathbb{P}'_5$, all three quadrature rules achieve very high accuracy, with the Dunavant quadrature losing one digit of accuracy and both Xiao/Gimbutas and bilinear quadratures correctly computing the orthogonal projection up to double precision. This is expected since all quadratures are designed to integrate such polynomial functions exactly.

On $\mathbb{P}'_6$, neither classical quadratures are accurate to full precision because both classical quadratures are only capable of exactly integrating polynomials of degree up to $11$. Since the bilinear quadrature can exactly integrate $\mathbb{P}_6 \times \mathbb{P}_6$, it has mean error on the order of machine precision.

On the sets $C$ and $TP$, none of the quadratures are accurate to machine precision since none of the functions are polynomials. However, the bilinear quadrature achieves better accuracy than the classical quadratures despite having the same number of evaluation points.

The existing classical quadratures are already very good, integrating non-polynomial functions from $C$ and $TP$ with several digits of accuracy. Additionally, the classical quadrature of Xiao/Gimbutas performs better than the Dunavant quadrature in all four cases. However, the bilinear quadrature was as good or better than the classical quadratures in each case, despite using the same number of evaluations. This result is explained by the fact that bilinear quadratures are specifically designed for the orthogonal projection problem, while classical quadratures are designed for evaluating a linear functional.

\subsection{Bilinear quadratures on other domains}

\begin{table}[t]
\begin{center}
\begin{tabular}{l r r r}
$n$ & $k$ & $\sigma$ & $\kappa_{\infty}(F^*W)$ \\ \hline
0 & 1  & 0.00000 & 1.00000e+0\\
1 & 3  & 0.67739 & 2.91852e+0\\
2 & 6  & 0.79523 & 7.50137e+0\\
3 & 10 & 0.92888 & 1.17526e+1\\
4 & 15 & 0.97590 & 2.68367e+1\\
5 & 21 & 0.99701 & 3.14417e+1\\
6 & 28 & 1.00066 & 6.42937e+1\\
7 & 36 & 1.00711 & 7.34237e+1\\
8 & 45 & 1.00784 & 1.03464e+2
\end{tabular}
\end{center}
\caption{Numerical results for $k$-point bilinear quadratures on $\mathbb{P}_n \times \mathbb{P}_n$ for $L^2$ on the interior of a square.}
\label{table.squarequad}
\end{table}

\begin{figure}
\centering
\includegraphics[width = 0.4\textwidth]{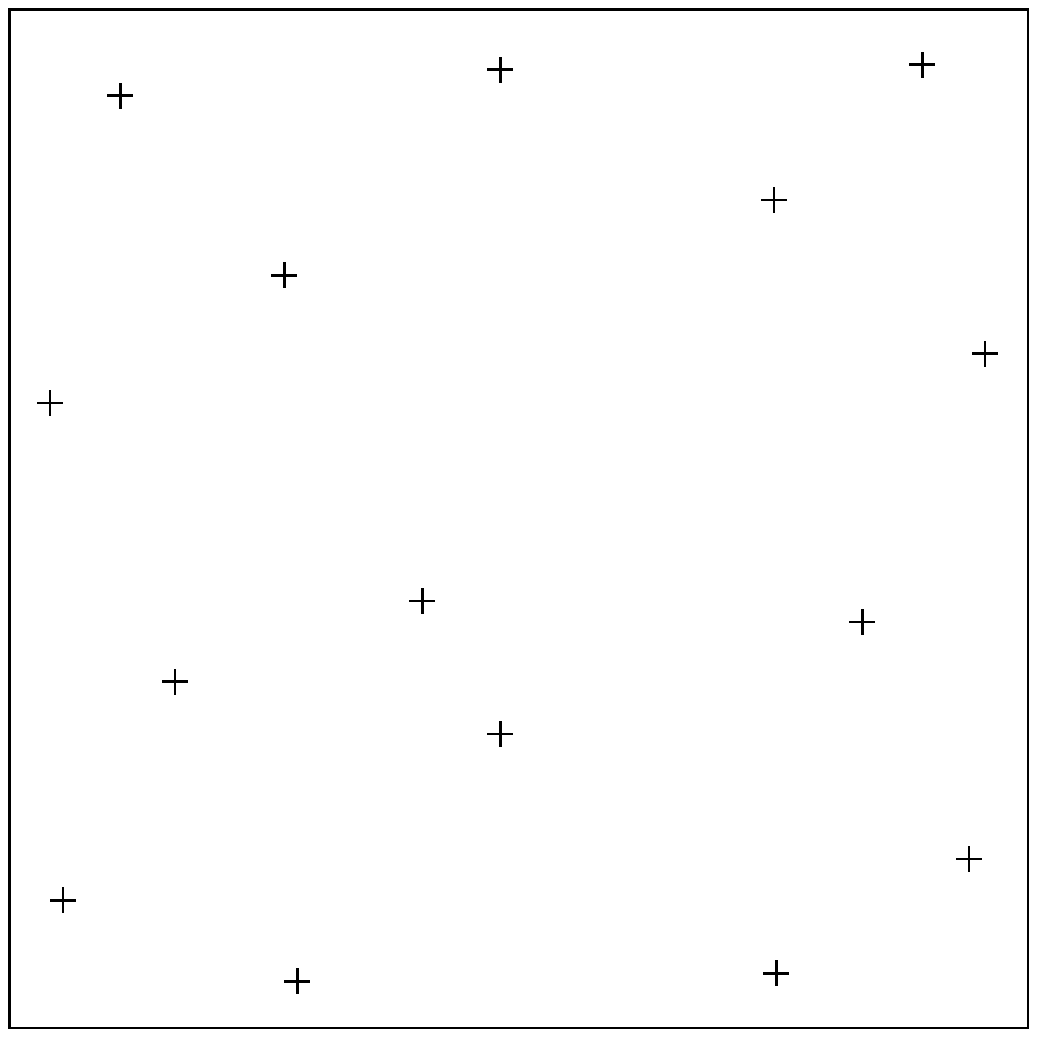}
\includegraphics[width = 0.4\textwidth]{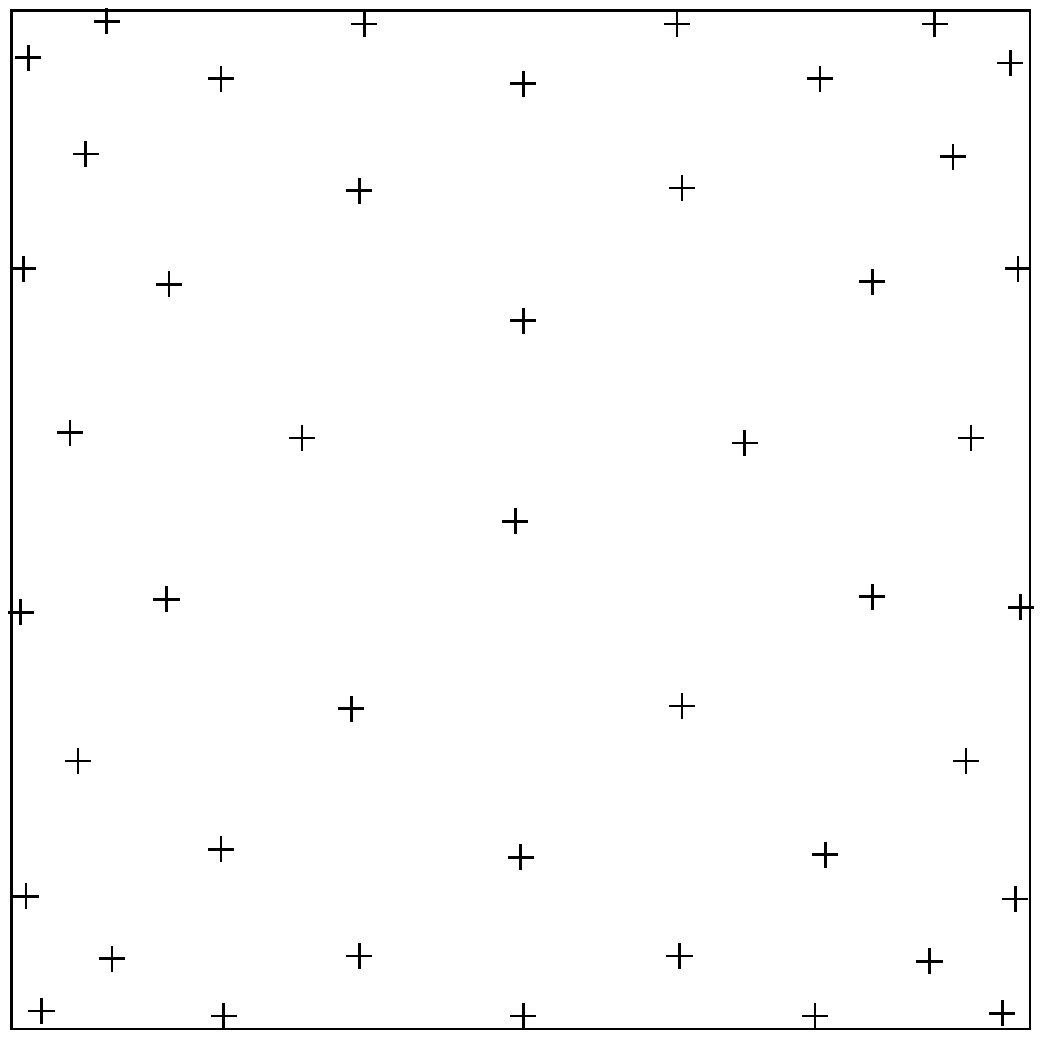}
% 3.25, 3.5, 2.25, 2.0
\caption{Evaluation points for bilinear quadratures on $\mathbb{P}_n \times \mathbb{P}_n$ for $L^2$ on the interior of a square, for $n = 4,8$.}
\label{fig.squarequad}
\end{figure}

In this section bilinear quadratures for $L^2$ inner products of polynomials on the interiors of a square and a circle are computed. We observe that, just as in the case of triangles, minimizing according to \eqref{eqn.mininnerproductsymmetric} produces well-behaved evaluation points.

For the case of the square domain $[-1,1]^2$, orthogonal polynomials are $P_n(x) P_m(y)$, where $P_n$ is the $n$th Legendre polynomial. Table \ref{table.squarequad} shows the minimized leading singular value $\sigma$ and matrix condition number $\kappa_{\infty}$ for several $k$-point bilinear quadratures on the square. Interestingly, the evaluation points on the square do not appear to obey any symmetries.

\begin{remark}
One can produce a classical quadrature scheme on the square by simply taking the tensor product of two Gaussian quadratures on an interval. However, this exactly integrates basis functions of the form $x^{\alpha} y^{\beta}$ with $0 \le \alpha \le n$, $0 \le \beta \le n$, rather than integrating polynomials whose \textit{total degree} does not exceed some value.
\end{remark}

On the unit disk, an orthogonal basis of polynomials is given in polar coordinates by the Zernike polynomials $Z_{m,n}(r, \theta)$, defined by
\begin{align*}
Z_{m,n}(r, \theta) := Q_{m,n}(r) \cos(m\theta), \quad Z_{-m,n}(r, \theta) := Q_{m,n}(r) \sin(m \theta), \\
Q_{m,n}(r) := \sum_{k=0}^{(n-m)/2} (-1)^k \binom{n-k}{k} \binom{n-2k}{\tfrac{n-m}{2} - k} r^{n - 2k},
\end{align*}
where $n \geq m \geq 0$ are integers and $n - m$ is even. Table \ref{table.diskquad} shows the minimized leading singular value $\sigma$ and matrix condition number $\kappa_{\infty}$ for several $k$-point bilinear quadratures on the unit disk.

\begin{table}[t]
\begin{center}
\begin{tabular}{l r r r}
$n$ & $k$ & $\sigma$ & $\kappa_{\infty}(F^*W)$ \\ \hline
0 & 1  & 0.00000 & 1.00000e+0\\
1 & 3  & 0.67617 & 3.04857e+0\\
2 & 6  & 0.79868 & 5.50559e+0\\
3 & 10 & 0.89712 & 1.01509e+1\\
4 & 15 & 0.94133 & 1.59179e+1\\
5 & 21 & 0.97804 & 2.24193e+1\\
6 & 28 & 1.00337 & 3.94055e+1\\
7 & 36 & 1.02908 & 5.60579e+1\\
8 & 45 & 1.07413 & 6.75064e+1
\end{tabular}
\end{center}
\caption{Numerical results for $k$-point bilinear quadratures on $\mathbb{P}_n \times \mathbb{P}_n$ for $L^2$ on the unit disk.}
\label{table.diskquad}
\end{table}

\begin{figure}
\centering
\includegraphics[width = 0.4\textwidth]{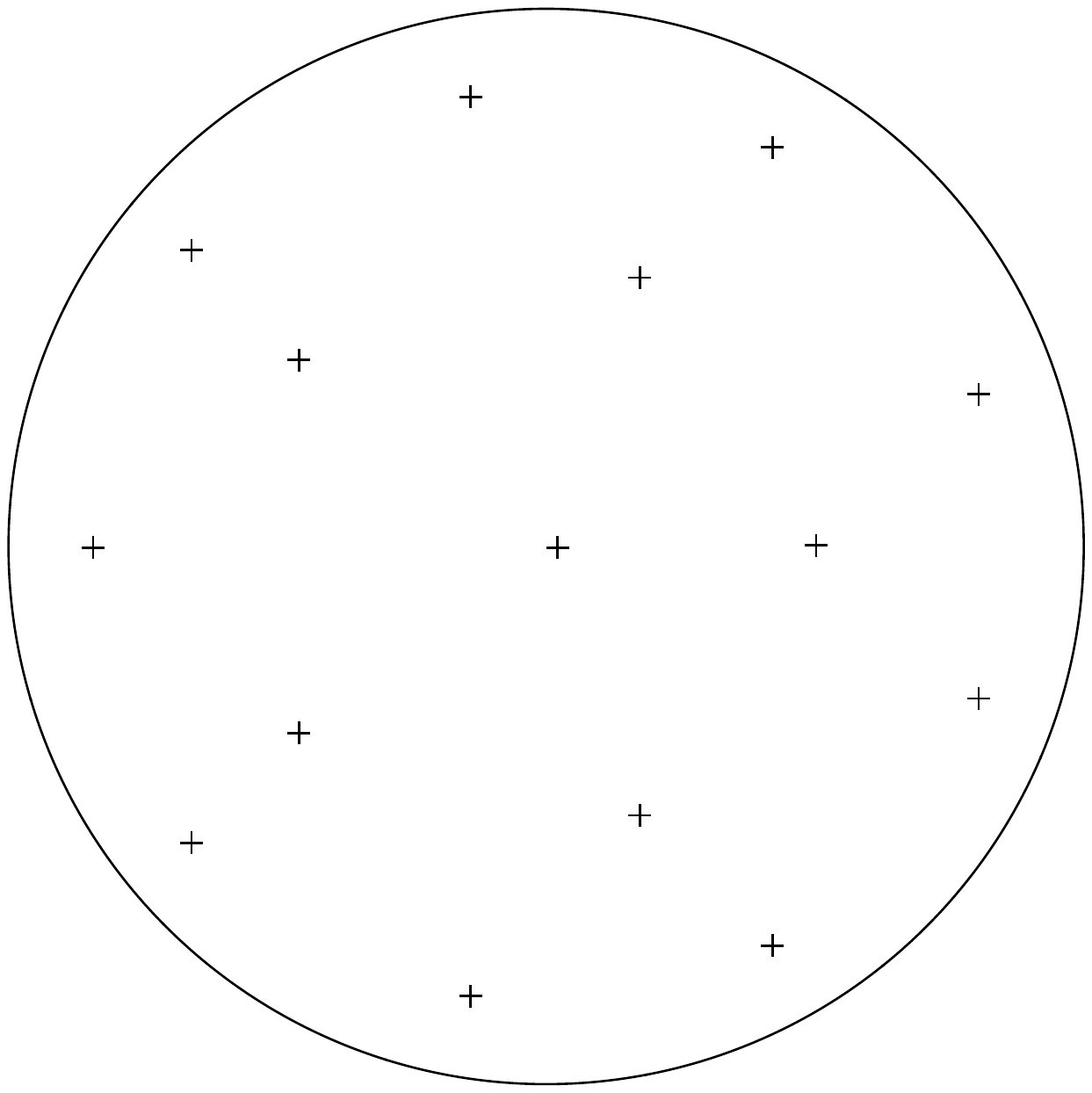}
\includegraphics[width = 0.4\textwidth]{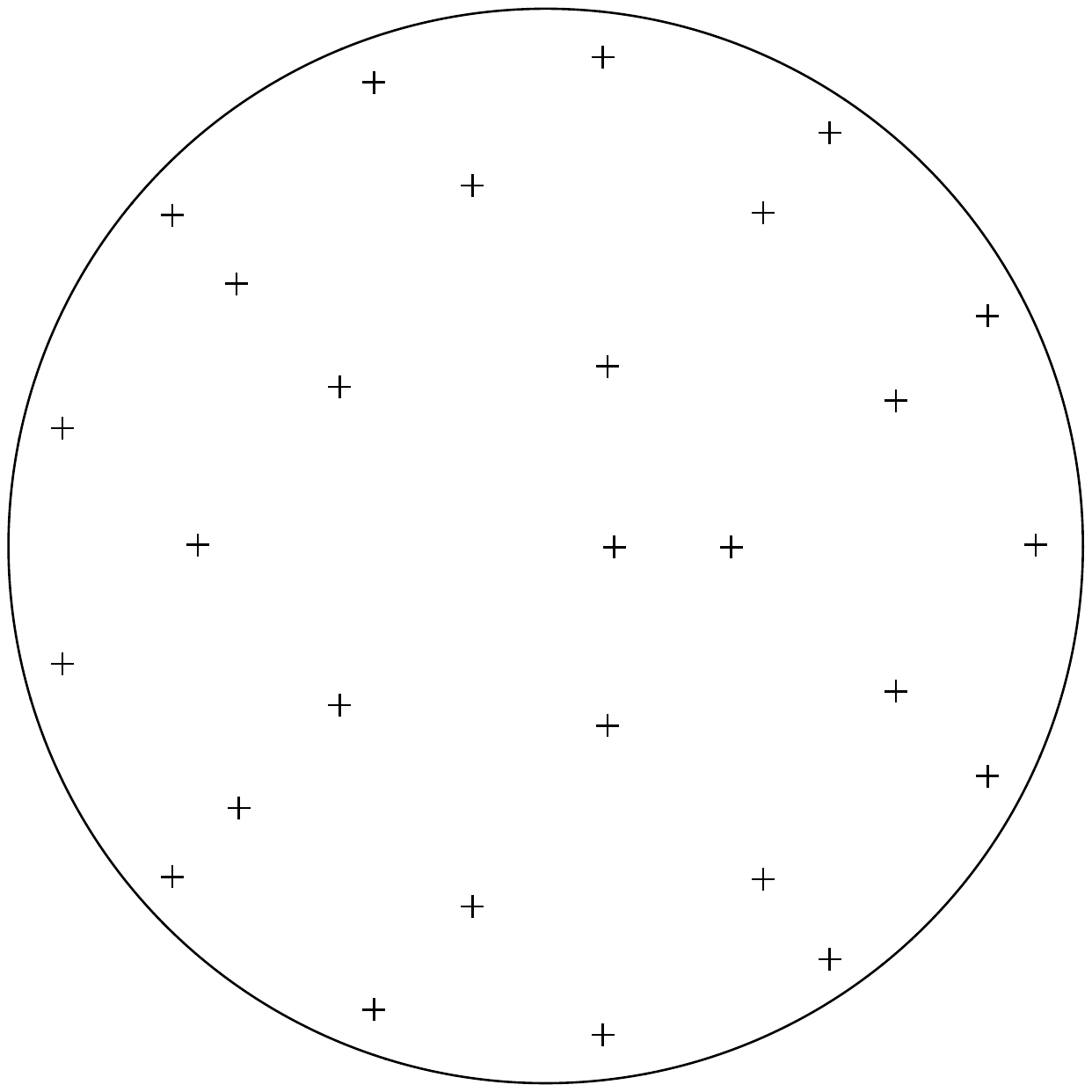}
% 2.875, 3.125, 1.875, 1.625
\caption{Evaluation points for bilinear quadratures on $\mathbb{P}_n \times \mathbb{P}_n$ for $L^2$ on the unit disk, for $n = 4,6$.}
\label{fig.diskquad}
\end{figure}

\subsection{Bilinear quadrature for the Sobolev inner product}

In this section we compute bilinear quadratures that evaluate the Sobolev inner product
\[
\langle f,g \rangle_{H^1} = \int_{\Omega} Df(x) \cdot A(x) Dg(x) + f(x) g(x)\, dx,
\]
where $A(x)$ is symmetric positive definite on $\Omega$. One advantage of a bilinear quadrature for $H^1$ is that the above integral can be numerically evaluated using only point evaluations of $f,g$ and does not require evaluating any derivatives.

For $\Omega = [-1,1]$, bilinear quadratures for $H^1$ on $\mathbb{P}_n \times \mathbb{P}_n$ and minimal on $\mathbb{P}_{n+1} \cap \mathbb{P}_n^{\perp}$ were computed for two positive weight functions $A(x) = 1 + x^2$ and $A(x) = e^{x}$. Orthogonalization was performed by starting with the Legendre polynomials and computing the Gram matrix $M$ using a 40-point classical Gaussian quadrature. 

\begin{table}[t]
\begin{center}
\begin{tabular}{l r r r}
$n$ & $k$ & $\sigma$ & $\kappa_{\infty}(F^*W)$ \\ \hline
1 & 2  & 0.00000 & 5.00000e+0\\
2 & 3  & 0.00000 & 1.38132e+1\\
3 & 4  & 0.00000 & 2.72011e+1\\
4 & 5  & 0.00000 & 6.59254e+1\\
5 & 6  & 0.00000 & 1.21461e+2\\
6 & 7  & 0.00000 & 1.86818e+2\\
7 & 8  & 0.00000 & 2.86549e+2\\
8 & 9  & 0.00000 & 4.22824e+2
\end{tabular}
\end{center}
\caption{Numerical results for bilinear quadratures on $\mathbb{P}_n \times \mathbb{P}_n$ for $H^1[-1,1]$ with the weight function $A(x) = 1 + x^2$.}
\label{table.sobquad1}
\end{table}

\begin{table}[t]
\begin{center}
\begin{tabular}{l r r r}
$n$ & $k$ & $\sigma$ & $\kappa_{\infty}(F^*W)$ \\ \hline
1 & 2  & 0.00000 & 4.52560e+0\\
2 & 3  & 0.00000 & 1.30446e+1\\
3 & 4  & 0.00000 & 2.49183e+1\\
4 & 5  & 0.00000 & 5.13338e+1\\
5 & 6  & 0.00000 & 9.28987e+1\\
6 & 7  & 0.00000 & 1.50063e+2\\
7 & 8  & 0.00000 & 2.28284e+2\\
8 & 9  & 0.00000 & 3.30651e+2
\end{tabular}
\end{center}
\caption{Numerical results for bilinear quadratures on $\mathbb{P}_n \times \mathbb{P}_n$ for $H^1[-1,1]$ with the weight function $A(x) = e^x$.}
\label{table.sobquad2}
\end{table}

In Tables \ref{table.sobquad1} and \ref{table.sobquad2} the singular value $\sigma$ and condition number $\kappa_{\infty}$ are shown for the two bilinear quadratures for $H^1$. In all cases, $\sigma$ is zero up to machine precision, since the exact solution to the minimization \eqref{eqn.mininnerproduct} is the roots of the $(n+1)$th-degree $H^1$-orthogonal polynomial, just as for Gaussian quadratures. We observe that the condition number of the approximation projection matrix $F^{\ast} W$ is larger than in the $L^2$ case. This can be explained by the fact that small perturbations in the function values can lead to large perturbations in the derivatives.

\section{Conclusions}
A quadrature framework for numerically evaluating a continuous bilinear form on function spaces has been presented, and an optimization procedure for computing such quadratures has been outlined. We have argued that this is the correct approach to numerically evaluating orthogonal projections of functions onto a fixed subspace.

We have also observed that the optimization approach for finding bilinear quadratures does not depend on the ambient dimension, the domain of integration, or the function space to be integrated exactly. Despite this generality, in our numerical experiments we found the resulting quadratures perform well, achieving both efficiency and accuracy.

There are several topics to explore in future work. One is the construction and utilization of bilinear quadratures tailored to specific high-order Galerkin methods. Another is the investigation of the performance of bilinear quadratures for evaluating other (non-Sobolev) bilinear forms. Yet another finding an efficient numerical method for solving the optimization problem \eqref{eqn.minquad2} for the non-invertible case. In that case, a bilinear quadrature is not uniquely determined by its evaluation points, and the optimization problem gains many additional degrees of freedom. Lastly, one could investigate the use of bilinear quadratures for solving integral equations. Such quadratures may prove useful in the Nystr\"{o}m discretization of Fredholm integral operators \cite{boland2} or boundary integral equations on domains with corners \cite{bremer2}.

\section{Acknowledgements}
I would like to thank Ming Gu, Benjamin Harrop-Griffiths, Casey Jao, Per-Olof Persson, and John Strain for comments and suggestions. This work was supported by National Science Foundation under grant DMS-0913695 and the Air Force Office of Scientific Research under grant FA9550-11-1-0242.

\nocite{*}
\bibliographystyle{amsalpha}
\bibliography{gaussref}

\end{document}